%\batchmode

\documentclass[review]{elsarticle}

\usepackage{amsthm,amsmath,amsfonts,amssymb}
\usepackage{subcaption}
\usepackage{lineno, hyperref}
\modulolinenumbers[5]
\usepackage{color}
\usepackage[utf8]{inputenc}

\usepackage{mathrsfs}

\setlength{\topmargin}{-0.3cm}
\setlength{\textwidth}{17.1cm}
\setlength{\textheight}{22.7cm}
\setlength{\oddsidemargin}{-0.2cm}
\setlength{\evensidemargin}{-0.2cm}

\newtheorem{theorem}{Theorem}[section]

\newtheorem{remark}[theorem]{Remark}

\newcommand{\vertiii}[1]{{\left\vert\kern-0.25ex\left\vert\kern-0.25ex\left\vert #1
\right\vert\kern-0.25ex\right\vert\kern-0.25ex\right\vert}}
\newcommand{\be}{\begin{equation}\begin{aligned}}
\newcommand{\ee}{\end{aligned}\end{equation}}
\newcommand{\ben}{\begin{equation}\nonumber\begin{aligned}}

\def\e1{{\varepsilon_{11}}}
\def\b1{{\beta_{11}}}
\def\bp3{{\beta_{33}}}
\def\ep3{{\varepsilon_{33}}}

%\journal{Numerical Heat Transfer, Part B: Fundamentals}

\bibliographystyle{elsarticle-num}

\begin{document}

\begin{frontmatter}	

\title{Mathematical analysis and numerical simulation of the \\ Guyer–Krumhansl heat equation}

%% or include affiliations in footnotes:
\author[address1]{A. J. A. Ramos}
\ead{ramos@ufpa.br}

\author[address3,address4]{R. Kovács\corref{mycorrespondingauthor}}
\ead{kovacs.robert@wigner.hu}

\author[address1]{M. M. Freitas}
\ead{mirelson@ufpa.br}

\author[address2]{D. S. Almeida Júnior}
\cortext[mycorrespondingauthor]{Corresponding author}
\ead{dilberto@ufpa.br}

\address[address1]{Faculty of Mathematics, Federal University of Par\'a, Raimundo Santana Street s/n, Salin\'opolis--PA, 68721-000, Brazil}

\address[address3]{Department of Energy Engineering, Faculty of Mechanical Engineering, Budapest University of Technology and Economics, Muegyetem rkp. 3., H-1111 Budapest, Hungary}

\address[address4]{Department of Theoretical Physics, Wigner Research Centre for Physics, Budapest, Hungary}

\address[address2]{PhD Program in Mathematics, Federal University of  Par\'a, Augusto Corr\^ea Street 01,
Bel\'em--PA, 66075-110, Brazil}

\begin{abstract}
The Guyer-Krumhansl heat equation has numerous important practical applications in both low-temperature and room temperature heat conduction problems. In recent years, it turned out that the Guyer-Krumhansl model can effectively describe the thermal behaviour of macroscale heterogeneous materials. Thus, the Guyer-Krumhansl equation is a promising candidate to be the next standard model in engineering. However, to support the Guyer-Krumhansl equation's introduction into the engineering practice, its mathematical properties must be thoroughly investigated and understood.
In the present paper, we show the basic structure of this particular heat equation, focusing on the differences in comparison to the Fourier heat equation {obtained when $(\tau_{q}, \mu^{2})\rightarrow (0,0)$}. Additionally, we prove the well-posedness of a particular, practically significant initial and boundary value problem. The stability of the solution is also investigated in the discrete space using a finite difference approach.
\end{abstract}
\vspace{0.5cm}
\begin{keyword}
non-Fourier heat conduction \sep finite difference discretization \sep irreversible thermodynamics
\MSC[2010] 35E15 \sep  65M06 \sep 93D20
\end{keyword}

\end{frontmatter}

%=============================================%
%=============================================%
\section{Introduction}
%=============================================%
%=============================================%
Among the various heat conduction models, the Guyer-Krumhansl (GK) equation has an exceptional background and possesses significant potential in engineering practice. It is first derived on the basis of kinetic theory and applied for low-temperature problems \cite{GK66}. It served as a starting point to find the celebrated window condition, i.e., the relation that helped the researchers detect the second sound in solids, too \cite{GK64}.
The kinetic approach is limited due to the prescribed heat conduction mechanisms with phonons \cite{DreStr93a}. Therefore, one cannot apply such a framework to more general room temperature macroscale problems. However, there is the possibility of utilizing a continuum approach -- called internal variables -- to derive the Guyer-Krumhansl equation based on irreversible thermodynamics \cite{VanFul12}. Note that different continuum approaches can also be suitable such as GENERIC \cite{Grmela2018b, SzucsEtal21} and Extended Irreversible Thermodynamics \cite{Lebon89, JouEtal10b}. A continuum approach results in a model with a notably wider range of validity since it allows to fit the parameters to the observed phenomenon \cite{FehEtal21}.

In this work we consider the one-dimensional version of the GK equation, \textit{i.e.},
\begin{align}
\rho c T_{t}+q_{x}&=0 \quad \mbox{in} \quad (0,l) \times (0,\infty), \label{Eq1}
\\
\tau_{q}q_{t}+q-\mu^{2}q_{xx}+kT_{x}&=0 \quad \mbox{in} \quad (0,l) \times (0,\infty), \label{Eq2}
\end{align}
subject to boundary conditions
{\begin{align}
q(0,t)=q(l,t)=0, \quad \mbox{for all} \quad t\geq 0, \label{CCont1}
\end{align}}
and initial data
\begin{align}
&T(x,0)=T_{0}(x), \quad q(x,0)=q_{0}(x),\quad x \in (0,l), \label{CInicial}
\end{align}
wherein Eq.~\eqref{Eq1} is the internal energy balance with $\rho$ and $c$ being the mass density and specific heat, and no source terms are considered in a rigid conductor. Furthermore, $T$ denotes the temperature, $q$ is the heat flux, and the indicies $t$ and $x$ denote the corresponding derivatives respect to time and space. Eq.~\eqref{Eq2} is a constitutive equation in which $\tau_q,\mu^2$ and $k$ are positive coefficients following from the second law of thermodynamics but no further restrictions are acting on them in the linear regime (T-independent coefficients). Consequently, that continuum model can be compatible with kinetic theory if it adapts the particular form for the coefficients, if necessary \cite{KovVan15}. Thanks to the flexibility of the coefficients, it was possible to explain and model several room temperature experiments successfully without any need for phonon hydrodynamics \cite{FehEtal21, Botetal16, VanEtal17}. All these experiments were performed using a heterogeneous material such as rocks, foams and layered samples. In that sense, the GK model is promising and could be the next standard model in the engineerint practice in the future. However, it would not be possible without thoroughly investigating and understanding its mathematical properties first.

Therefore, in the following, we present an essential attribute of the coefficients, characteristic only for non-Fourier heat equations, which is a rarely known feature of the GK equation. Namely, the coefficients are functionally connected, which becomes crucial for T-dependent material parameters \cite{KovRog20}.
Then, considering only the linear problem, we prove the well-posedness of the model \eqref{Eq1}--\eqref{CInicial}. This is also crucial since there are popular models in the corresponding literature (such as the famous dual-phase-lag equation), which are not necessarily well-posed due to the missing physical and mathematical consistency \cite{Dreetal09, Ruk14, Ruk17}.

%=======================================================%
%=======================================================%
\section{Temperature-dependent coefficients}
%=======================================================%
%=======================================================%
In the framework of irreversible continuum thermodynamics, the constitutive equations are derived utilizing the second law of thermodynamics. Mathematically speaking, the constitutive equations are found as the solutions of the entropy inequality\begin{align}
\rho s_t + J_x = \sigma_s \geq 0, \label{sbal}
\end{align}
where $s$ is the entropy density, $J$ is the flux of the entropy density and $\sigma_s$ stands for the entropy production. The solutions of Eq.~\eqref{sbal} are called Onsagerian relations. The detailed derivation procedure can be found in \cite{VanFul12}. In the case of the GK equation, the Onsagerian relation is
\begin{align}
\left (\frac{1}{T}+l_1 q_x \right)_x - \rho m q_t -l_2 q=0, \label{GKE}
\end{align}
with $l_1,m \geq0$, $l_2>0$, furthermore
\begin{align}
\tau_q=\frac{\rho m}{l_2}, \quad k=\frac{1}{l_2 T^2}, \quad \mu^2 = \frac{l_1}{l_2} \label{coeffgk}
\end{align}
forming the coefficients of the GK constitutive equation, which reads
\begin{align}
\tau_q q_t + q - \mu^2 q_{xx}+k T_x=0.
\label{GK}
\end{align}
Eq.~\eqref{GK} is valid only for constant coefficients, i.e., when $l_1, l_2$ and $m$ do not depend on the state variable T, which is also apparent from Eqs.~\eqref{GKE}--\eqref{coeffgk}. Notably, Eq.~\eqref{coeffgk} presents the functional connections between the coefficients. In case of a temperature-dependent thermal conductivity $k(T)$, $l_2$ becomes a function of $T$, too, consequently both $\tau_q$ and $\mu^2$ become $T$-dependent. This is a non-trivial consequence of thermodynamics and characteristic only for non-Fourier heat equations.
As $k(T)$ is usual in the engineering practice, and Eq.~\eqref{coeffgk} is the direct consequence of the second law of thermodynamics, it is an essential physical property. That attribute is entirely missing from the dual-phase-lag approach.

Furthermore, the $T$-dependency of $\tau_q$ introduced by $l_2(T)$ is not necessarily the same, which is observed in the experiments for $\tau_q(T)$. In other words, one must adjust the $T$-dependence of $\tau_q$ according to the experiments using $\rho$ or $m$. Nevertheless, it has further consequences. If $\rho$ is $T$-dependent, then mechanics must be involved as well \cite{KovRog20}. If $m$ becomes a function, then the entire derivation must be repeated since Eq.~\eqref{GKE} is valid only for constant $m$. Either way, Eq.~\eqref{GKE} loses its validity, and one must be cautious.
Additionally, due to $l_2(T)$, $\mu^2$ is also $T$-dependent. If one has direct measurements about $\mu^2(T)$, then $l_1$ can be used to adjust the proper $T$-dependent behaviour. Following Eq.~\eqref{GKE}, $l_1(T)$ would lead to a new term, i.e.,
\begin{align}
\left (\frac{1}{T} \right)_x +\big(l_1(T)\big)_x q_x +l_1(T) q_{xx} - \rho m q_t -l_2 q=0, \label{GKE2}
\end{align}
holds, where
\begin{align}
\big(l_1(T)\big)_x q_x = \frac{\textrm{d} l_1(T)}{\textrm{d} T} T_x q_x,
\end{align}
and
\begin{align}
\frac{1}{l_2}\left (-\frac{1}{T^2} +\frac{\textrm{d} l_1(T)}{\textrm{d} T} q_x \right) T_x +\frac{l_1(T)}{l_2} q_{xx} - \frac{\rho m}{l_2} q_t -q=0. \label{GKE3}
\end{align}
Hence, beyond the $T$-dependence of $\mu^2(T)$, $l_1(T)$ introduces a new term, which could modify the thermal conductivity, leading to state dependence in $k$, seemingly. Furthermore, the positivity of $k$ could provide further restrictions on $l_1(T)$ since it appears with opposite sign in Eq.~\eqref{GKE3}.
Interestingly, $k$ is not simply $T$-dependent but also begins to depend on the spatial derivative (divergence) of $q$. It stands as an experimental challenge to distinguish this effect from the usual $T$-dependence of thermal conductivity, that is, by introducing $l_2(T)$. These have significantly different consequences and physical implications. 

Consequently, to determine the thermal conductivity of a material with complex inner structure, it is strongly suggested to apply only steady-state measurement methods. In steady-state, the equilibrium is not modified by the non-Fourier terms, and that approach could provide a reliable method to separate the $T$-dependence of these coefficients.

Overall, if the thermal conductivity depends on the temperature through $l_2(T)$, then the other parameters inherit that dependence, and one must include mechanics in the model because of $\rho(T)$. Furthermore, if $\mu^2$ depends on the temperature through $l_1(T)$, then it influences the thermal conductivity as well but leaves the relaxation time $\tau_q$ unchanged.

%=======================================================%
%=======================================================%
\section{Well-posedness}
%=======================================================%
%=======================================================%
Among the numerous non-Fourier heat conduction models, the dual-phase-lag (DPL) concept, i.e.,
\begin{align}
q(x,t+\tau_q) = - k T_x(x,t+\tau_T) \label{dpl}
\end{align}
stands out in popularity. Here, it is assumed that both quantities possess some time delay, and various models are derived by Taylor series expansion. Although the DPL concept is used to model various heat conduction problems, such an approach hides multiple substantial physical and mathematical aspects. The Eq.~\eqref{dpl} has no immediate compatibility with the first and second laws of thermodynamics, and the DPL model needs additional conditions. In other words, neither the energy conservation nor the stability is guaranteed, contrary to the Guyer-Krumhansl equation. It has been shown that Eq.~\eqref{dpl} could lead to ill-posed problems in certain situations \cite{Dreetal09, Ruk14, Ruk17}.
Therefore, it is of crucial importance to have a model with a solid mathematical and physical basis in order to avoid such stability or ill-posedness issues. 

Here, in the following, we show that well-posedness is fulfilled for the GK equation in the most crucial situation from a physical point of view. Let us recall that in Eqs.~\eqref{CCont1} and \eqref{CInicial} we use adiabatic (perfect thermal insulation) boundary conditions for inhomogeneous initial conditions. The other possibilities, the prescribed temperature and heat convection (i.e., the usual first and third-type) boundary conditions, would implement dissipation into the system. These could result in stable and convergent equilibrium solutions even for unstable and physically inconsistent models. In our case, the adiabatic boundary condition means that the initial energy content of the entire system remains the same and, in equilibrium, becomes homogeneous in space. This holds for the internal energy for which we have a balance equation, and it is a conserved quantity. 

However, from a mathematical point of view, energy as a notion is used differently, usually referring to a variational background, and here we follow the usual convention. Therefore, let us introduce the term `functional energy' of the system \eqref{Eq1}--\eqref{CInicial} as
\begin{align}\label{energy}
E(t):=\frac{\rho c}{2}\int_{0}^{l}T^{2}dx+\frac{\tau_{q}}{2k}\int_{0}^{l}q^{2}dx, 
\end{align}
which decreases in time as $q$ will be uniformly zero in equilibrium, i.e.,
\begin{equation}\label{dissipation.law}
\frac{\textrm{d}}{\textrm{d}t}E(t)=-\frac{1}{k}\int_{0}^{l}q^{2}\textrm{d}x-\frac{\mu^{2}}{k}\int_{0}^{l}q_{x}^{2}\textrm{d}x<0, \quad \mbox{for all} \quad t\geq 0,
\end{equation}
consequently, $E$ is a non-increasing function of the time variable $t$.

In this section we will show that the system \eqref{Eq1}--\eqref{CInicial} is well-posed using the semigroup technique. The following notations will be used in this paper:
\begin{eqnarray}
\big<u, v\big>:=\big<u, v\big>_{L^{2}(0, l)}
\quad \mbox{and} \quad \|u\|:=\sqrt{\big<u, u\big>}, \quad \mbox{for all} \quad u, v \in L^{2}(0,l).
\end{eqnarray}
Let us start by introducing the phase space by
\begin{eqnarray}\label{Hilbert_space}
\mathcal{H}:=L^2(0,l)\times L^2(0,l),
\end{eqnarray}
which is a Hilbert space endowed with the norm
\begin{eqnarray}
\|U\|^2_{\cal H}=\rho c\|u\|^2+\frac{\tau_{q}}{k}\|v\|^2
\end{eqnarray}
and the corresponding inner product $\big<\cdot,\cdot\big>_{\cal H}$, for all $U = (u,v)^{\top}\in \mathcal{H}$. Thus, denoting $u:=T$ and $v:=q$ we can transform the system (\ref{Eq1})--(\ref{CInicial}) into the first-order set of equations
\begin{eqnarray}\label{prob.Cauchy}
\begin{cases}
U_t(t) = {\cal A}U(t), \quad  \mbox{for all} \quad t>0,\label{prob.Cauchy.1} 
\\
U(0) = U_0,\label{prob.Cauchy.2} 
\end{cases}
\end{eqnarray}
where $U_0=(T_{0}, \, q_{0})^{\top}$ and $\cal A: D(\cal A)\subset \cal H\rightarrow \cal H$ is the differential operator given by
\begin{eqnarray}\label{Operador-A}
\mathcal{A}:=\left(\begin{array}{ccccc}
0 & -\frac{1}{\rho c}\frac{\partial(\cdot)}{\partial x} 
\\
\\
-\frac{k}{\tau_{q}}\frac{\partial(\cdot)}{\partial x} & -\frac{1}{\tau_{q}}I_{d}(\cdot)+ \frac{\mu^{2}}{\tau_{q}}\frac{\partial^{2}(\cdot)}{\partial x^{2}}  
\end{array}\right),
\end{eqnarray}
where $I_{d}(\cdot)$ denotes the identity operator, furthermore
\begin{eqnarray}
\nonumber D(\mathcal{A}):=\Big\{U=(u,v)^{\top} \in\mathcal{H}; \ \ \mu^{2}v_{x}-ku  \in  H^{1}(0,l), \ v\in H^{1}_{0}(0,l)\Big\}.
\end{eqnarray}

\begin{theorem}\label{WP}
The system \eqref{prob.Cauchy} is well-posed, \textit{i.e.}, for any $U_{0}\in \mathcal{H}$, the system \eqref{prob.Cauchy} has a unique weak solution $U(t)=e^{\mathcal{A}t}U_{0}\in C\big([0,\infty\big); \, \mathcal{H})$. Furthermore, if
$U_{0}\in D(\mathcal{A})$, $U(t)\in C^{1}\big([0,\infty); \, \mathcal{H}\big)\cap C\big([0,\infty); \, D(\mathcal{A})\big)$ becomes the classic solution for \eqref{prob.Cauchy}.
\end{theorem}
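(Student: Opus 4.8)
The plan is to prove that $\mathcal{A}$ is the infinitesimal generator of a $C_0$-semigroup of contractions on $\mathcal{H}$ via the Lumer--Phillips theorem, and then read off the two assertions from the standard mild/classical solution dichotomy of linear semigroup theory. Thus I would check three things: (i) $D(\mathcal{A})$ is dense in $\mathcal{H}$, (ii) $\mathcal{A}$ is dissipative, and (iii) $I-\mathcal{A}$ is surjective onto $\mathcal{H}$ (maximality). Item (i) is immediate: $C_c^\infty(0,l)\times C_c^\infty(0,l)\subset D(\mathcal{A})$ since for smooth compactly supported $u,v$ one has $v\in H^1_0(0,l)$ and $\mu^2 v_x-ku\in C_c^\infty(0,l)\subset H^1(0,l)$, and this subspace is dense in $\mathcal{H}=L^2(0,l)\times L^2(0,l)$.

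For dissipativity I would introduce the auxiliary variable $w:=\mu^2 v_x-ku\in H^1(0,l)$, so that for $U=(u,v)^{\top}\in D(\mathcal{A})$ the second component of $\mathcal{A}U$ is $\tau_q^{-1}(w_x-v)$ while $u=k^{-1}(\mu^2 v_x-w)$. Computing $\langle\mathcal{A}U,U\rangle_{\mathcal{H}}$ from the weighted inner product, integrating by parts once in the term $\langle w_x,v\rangle$ (the boundary contributions vanish because $v\in H^1_0(0,l)$), and substituting $w$ back, all cross terms cancel and one is left with $\mathrm{Re}\,\langle\mathcal{A}U,U\rangle_{\mathcal{H}}=-\tfrac1k\|v\|^2-\tfrac{\mu^2}{k}\|v_x\|^2\le 0$; this is precisely the abstract counterpart of the dissipation identity \eqref{dissipation.law}.

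The substantive step is maximality. Given $F=(f,g)^{\top}\in\mathcal{H}$, I solve $(I-\mathcal{A})U=F$ in $D(\mathcal{A})$ by eliminating $u=f-\tfrac{1}{\rho c}v_x$ from the first equation and inserting it into the second, which reduces the system to the scalar two-point boundary value problem $(\tau_q+1)v-(\mu^2+\tfrac{k}{\rho c})v_{xx}=\tau_q g-k f_x$ on $(0,l)$ with $v(0)=v(l)=0$, read in the weak sense: find $v\in H^1_0(0,l)$ with $(\tau_q+1)\langle v,\phi\rangle+(\mu^2+\tfrac{k}{\rho c})\langle v_x,\phi_x\rangle=\tau_q\langle g,\phi\rangle+k\langle f,\phi_x\rangle$ for all $\phi\in H^1_0(0,l)$. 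The bilinear form is bounded and coercive on $H^1_0(0,l)$, with coercivity constant $\min\{\tau_q+1,\ \mu^2+k/(\rho c)\}>0$ guaranteed by the positivity of the coefficients, and the right-hand side is a bounded functional on $H^1_0(0,l)$ since $f,g\in L^2(0,l)$; Lax--Milgram then yields a unique $v$. Setting $u:=f-\tfrac{1}{\rho c}v_x\in L^2(0,l)$, the weak equation shows that $w=\mu^2 v_x-ku=(\mu^2+\tfrac{k}{\rho c})v_x-kf$ satisfies $w_x=(\tau_q+1)v-\tau_q g\in L^2(0,l)$, hence $w\in H^1(0,l)$, so $U=(u,v)^{\top}\in D(\mathcal{A})$ and $(I-\mathcal{A})U=F$.

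I expect the delicate point to be exactly this last manoeuvre: the forcing term $-kf_x$ must be kept strictly inside the variational formulation (one cannot assume $f\in H^1$), and it is the \emph{combination} $\mu^2 v_x-ku$, not $u$ or $v$ individually, that gains the extra derivative placing $U$ in $D(\mathcal{A})$ — so the regularity bookkeeping needs care. Once $\mathcal{A}$ is densely defined and maximal dissipative (m-dissipative), Lumer--Phillips gives the contraction semigroup $\{e^{\mathcal{A}t}\}_{t\ge 0}$; then $U(t)=e^{\mathcal{A}t}U_0\in C([0,\infty);\mathcal{H})$ is the unique weak solution for $U_0\in\mathcal{H}$, and $U(t)\in C^1([0,\infty);\mathcal{H})\cap C([0,\infty);D(\mathcal{A}))$ is the unique classical solution when $U_0\in D(\mathcal{A})$, which is the statement of the theorem.
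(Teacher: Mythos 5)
Your proposal is correct, and it follows the same overall architecture as the paper (Lumer--Phillips, with dissipativity computed exactly as in the paper and surjectivity obtained from Lax--Milgram), but the surjectivity step is handled by a genuinely different reduction. The paper keeps the resolvent system coupled: it poses a single variational problem on $\mathcal{V}=L^{2}(0,l)\times H^{1}_{0}(0,l)$ with the nonsymmetric bilinear form $\mathcal{B}\big((u,v),(\widetilde u,\widetilde v)\big)=\lambda\rho c\langle u,\widetilde u\rangle+\tfrac1k(1+\lambda\tau_q)\langle v,\widetilde v\rangle+\langle v_x,\widetilde u\rangle+\tfrac1k\langle \mu^{2}v_x-ku,\widetilde v_x\rangle$, whose off-diagonal terms cancel on the diagonal so that coercivity holds on $\mathcal{V}$. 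You instead eliminate $u=f-\tfrac{1}{\rho c}v_x$ and solve a scalar coercive Dirichlet problem for $v$ alone in $H^{1}_{0}(0,l)$, with the forcing $-kf_x$ kept in weak form as $k\langle f,\phi_x\rangle$; this is more elementary (a standard symmetric elliptic problem rather than a mixed formulation) at the cost of having to reconstruct $u$ and verify a posteriori that $\mu^{2}v_x-ku=(\mu^{2}+\tfrac{k}{\rho c})v_x-kf$ has an $L^{2}$ weak derivative, which you do correctly by reading $w_x=(\tau_q+1)v-\tau_q g$ off the weak equation. Two further points in your favour: you verify the density of $D(\mathcal{A})$ explicitly (the paper merely asserts it), and your regularity bookkeeping for $\mu^{2}v_x-ku\in H^{1}(0,l)$ is spelled out where the paper dispatches it in one line. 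Your use of $\lambda=1$ only is also sufficient for Lumer--Phillips, whereas the paper treats all $\lambda>0$; both are fine.
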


\begin{proof}
It is not difficult to see that $\mathcal{A}$ is a dissipative operator in the space $\mathcal{H}$. More precisely, we have
\begin{eqnarray}\label{part.real}
\big\langle\mathcal{A}U, U\big\rangle_{\mathcal{H}}
=-\frac{1}{k}\|q\|^{2}-\frac{\mu^{2}}{k}\|q_{x}\|^{2}<0.
\end{eqnarray}
Next, we prove that $\lambda I -  \mathcal{A}$ is surjective, for all $\lambda > 0$. Since $\mathcal{A}$ is dissipative and $D(\mathcal{A})$ is dense in $\mathcal{H}$, it is sufficient to show that $\mathcal{A}$ is maximal, that is, $\lambda I -  \mathcal{A}$ is surjective. For this purpose, given $F=\big(f_1, f_2\big)^{\top} \in \mathcal{H}$, we seek $U=\big(u, v\big)^{\top} \in D(\mathcal{A})$ which is the solution of
\begin{eqnarray}
(\lambda I - \mathcal{A})U = F,\label{2:0}
\end{eqnarray}
that is, the entries of $U$ satisfy the system of equations
\begin{eqnarray}
\lambda\rho c \,u+v_{x}=\rho cf_{1} \ &\in& \ L^2(0,l),\label{eq.01}
\\
\lambda\tau_{q}v+v-\mu^{2}v_{xx}+ku_{x}=\tau_{q}f_{2} \ &\in& \ L^2(0,l).\label{eq.02} 
\end{eqnarray}
Solving the system (\ref{eq.01})--(\ref{eq.02}) is equivalent to finding
\begin{equation*}
(u, v) \in L^{2}(0,l)\times H^{1}_{0}(0,l),
\end{equation*} 
such that 
\begin{eqnarray}
\lambda\rho c \,\big<u, \, \widetilde{u}\big>+\big<v_{x}, \, \widetilde{u}\big>&=&\rho c\, \big<f_{1},\, \widetilde{u}\big>,\label{eq.01b}
\\
\frac{\lambda\tau_{q}}{k}\big<v, \,\widetilde{v} \big>+\frac{1}{k}\big<v, \, \widetilde{v}\big>+\frac{1}{k}\big<\mu^{2}v_{x}-ku, \, \widetilde{v}_{x}\big>&=&\frac{\tau_{q}}{k}\big<f_{2}, \,\widetilde{v}\big>,\label{eq.02b} 
\end{eqnarray}
for all $(\widetilde{u}, \widetilde{v}) \in L^{2}(0,l)\times H^{1}_{0}(0,l)$.

Now we observe that solving the system (\ref{eq.01})--(\ref{eq.02}) is equivalent to solve the problem
\begin{equation}\label{2:12}
\mathcal{B}\big( (u,v),\left(\widetilde{u},\widetilde{v}\right) \big) = \mathcal{G}\left( \widetilde{u},\widetilde{v} \right),
\end{equation}
where $\mathcal{B}: \big[L^{2}(0,l)\times H^{1}_{0}(0,l)\big]^2 \rightarrow \mathbb{R}$ is the bilinear form given by 
\begin{align}
\mathcal{B} \big((u,v),\big(\widetilde{u},\widetilde{v}\big) \big) :=  \lambda\rho c \,\big<u, \, \widetilde{u}\big>+\frac{1}{k}(1+\lambda\tau_{q})\big<v, \,\widetilde{v} \big>+\big<v_{x}, \, \widetilde{u}\big>+\frac{1}{k}\big<\mu^{2}v_{x}-ku, \, \widetilde{v}_{x}\big>
\end{align}
and $\mathcal{G}: L^{2}(0,l)\times H^{1}_{0}(0,l) \rightarrow \mathbb{R}$ is the linear form given by
\begin{equation}\label{2:13}
\mathcal{G}\big(\widetilde{u}, \, \widetilde{v}\big) := \rho c\, \big<f_{1},\, \widetilde{u}\big>+\frac{\tau_{q}}{k}\big<f_{2}, \,\widetilde{v}\big>.
\end{equation}
Now, we introduce the Hilbert space $\mathcal{V} := L^{2}(0,l)\times H^{1}_{0}(0,l)$ equipped with the norm
\begin{equation}\label{2:14}
\big\|(u,v)\big\|_\mathcal{V}^2 = \|u\|^{2} + \|v\|^{2} + \mu^{2}\|v_{x}\|^{2}.
\end{equation}
It is clear that $\mathcal{B}$ and $\mathcal{G}$ are bounded. Furthermore, we can obtain that there exists a positive constant $C$ such that
\begin{eqnarray}
\mathcal{B} \big((u, v), \, (u, v) \big) &=& \lambda\rho c \| u\|^2+ \frac{1}{k}(1+\lambda\tau_{q})\| v\|^2 + \frac{\mu^{2}}{k}\| v_{x}\|^2  \nonumber 
\\
&\geq& C\|(u, v)\|_\mathcal{V}^2, \label{2:15}
\end{eqnarray}
which implies that $\mathcal{B}$ is coercive.

Hence, we assert that $\mathcal{B}$ is continuous and coercive form on $\mathcal{V}\times\mathcal{V}$ and $\mathcal{G}$ is continuous form on $\mathcal{V}$. So applying the Lax-Milgram Theorem, we deduce that for all
\begin{equation}\label{2:16}
(\widetilde{u}, \widetilde{v}) \in  L^{2}(0,l)\times H^{1}_{0}(0,l)
\end{equation}
the problem \eqref{2:12} admits a unique solution
\begin{equation}\label{2:17}
(u, v) \in  L^{2}(0,l)\times H^{1}_{0}(0,l).
\end{equation}
{%\color{magenta}
Furthermore, it follows from Eqs. (\ref{eq.01}) and (\ref{eq.02}) that $\mu^{2}v_{x}-ku \in H^{1}(0,l)$.
}

Thus the operator $\lambda I -  \mathcal{A}$ is surjective for all $\lambda>0$. Consequently, $\mathcal{A}$ is a maximal operator. Hence, the result of Theorem \ref{WP} follows from Lumer–Phillips Theorem (see \cite{Pazy}).
\end{proof}

{%\color{blue}
%=======================================================%
%=======================================================%
\section{Uniform stabilization}
%=======================================================%
%=======================================================%

In this section we prove that the solution of the system (\ref{Eq1})--(\ref{CInicial}) decay exponentially uniformly with respect to the parameters $\tau_{q}$ and $\mu^{2}$ to an equilibrium point. In the $(\tau_{q}, \mu^{2})\rightarrow (0,0)$ limit, we obtain  Fourier model for which the energy decays exponentially as well in agreement with the thermodynamic requirements. Furthermore, we prove that the equilibrium point (or state) depends directly on the space where the initial condition $T_{0}$ is inserted.

\begin{theorem}\label{main.result.exponential.decay} Let $E(t)$ be the functional energy  for the system (\ref{Eq1})--(\ref{CInicial}). Suppose that the initial data $(T_{0}, q_{0})\in D(\mathcal{A})$.
Then, there exist constants $M_{0}$, $M_{1}$, $\omega>0$ uniformly bounded as $(\tau_{q}, \mu^{2})\rightarrow (0,0)$, such that $E(t)$ satisfies the following decay estimates:
\begin{itemize}
\item[\textbf{$(i)$}] If the initial data $T_{0}\notin L_{*}^{2}(0,l):=\big\{\mathcal{T}\in L^{2}(0,l); \ \int_{0}^{l}\mathcal{T}dx=0\big\}$, then
\begin{eqnarray}\label{decay.new.energy}
E(t)\leq M_{0}E(0)e^{-\omega t}+M_{1}\|C_{T}\|_{\infty},\quad \mbox{for all} \quad t\geq 0,
\end{eqnarray}
where $C_{T}(t):=\big(\mu^{2}q_{x}(0,t)-kT(0,t)\big)\int_{0}^{l}T(s,t)ds$ and $\|C_{T}\|_{\infty}:=\sup\big\{|C_{T}(t)|; \ t\in \mathbb{R}^{+}\big\}$ uniformly bounded as $\mu^{2}\rightarrow 0$.
\item[\textbf{$(ii)$}] If the initial data $T_{0}\in L_{*}^{2}(0,l)$, then
\begin{eqnarray}\label{decay.new.energy}
E(t)\leq
M_{0} E(0)e^{-\omega t}, \quad \mbox{for all} \quad t\geq 0.
\end{eqnarray}
\end{itemize}
\end{theorem}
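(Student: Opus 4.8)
The plan is to construct a Lyapunov functional $\mathcal{L}(t)$ that is equivalent to $E(t)$ — with equivalence constants that stay bounded and bounded away from zero as $(\tau_{q},\mu^{2})\to(0,0)$ — and whose time derivative is controlled by $-\omega\,\mathcal{L}(t)+\|C_{T}\|_{\infty}$. Since $(T_{0},q_{0})\in D(\mathcal{A})$, Theorem \ref{WP} provides a classical solution, so all integrations by parts below are legitimate. The first observation is that integrating \eqref{Eq1} over $(0,l)$ and using \eqref{CCont1} gives $\frac{d}{dt}\int_{0}^{l}T\,dx=0$, hence $m_{0}:=\int_{0}^{l}T(x,t)\,dx=\int_{0}^{l}T_{0}\,dx$ for all $t$; in case (ii) this mean vanishes. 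The dissipation identity \eqref{dissipation.law} already controls $\|q\|^{2}$ and $\mu^{2}\|q_{x}\|^{2}$, so the only missing ingredient is dissipation of $T$.

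To manufacture it I would test \eqref{Eq2} against the multiplier $w(x,t):=\int_{x}^{l}T(y,t)\,dy$, which satisfies $w_{x}=-T$, $w(l,t)=0$, $w(0,t)=m_{0}$ and, by \eqref{Eq1}, $\rho c\,w_{t}=q$. Integrating by parts — the boundary terms survive only at $x=0$, where $q(0,t)=0$ but $w(0,t)=m_{0}$ — I expect the functional $\mathcal{F}(t):=\tau_{q}\int_{0}^{l}q\,w\,dx$ to satisfy
\[
\frac{d}{dt}\mathcal{F}(t)=\frac{\tau_{q}}{\rho c}\|q\|^{2}-\int_{0}^{l}q\,w\,dx+\mu^{2}\int_{0}^{l}q_{x}T\,dx-k\|T\|^{2}-C_{T}(t),
\]
where $C_{T}(t)=\big(\mu^{2}q_{x}(0,t)-kT(0,t)\big)m_{0}$ is precisely the surviving boundary contribution and is identically zero when $m_{0}=0$. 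The Poincar\'e inequality $\|w\|\le l\|T\|$ (available because $w(l,t)=0$) together with Young's inequality then absorb the cross terms, yielding $\frac{d}{dt}\mathcal{F}(t)\le -\tfrac{k}{2}\|T\|^{2}+C\big(\|q\|^{2}+\mu^{2}\|q_{x}\|^{2}\big)+|C_{T}(t)|$ with $C$ independent of $\tau_{q},\mu^{2}$ on the relevant bounded ranges.

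Next I would set $\mathcal{L}(t):=N\,E(t)+\mathcal{F}(t)$ with $N$ large. From $|\mathcal{F}(t)|\le\tfrac{\tau_{q}l}{2}\big(\|q\|^{2}+\|T\|^{2}\big)$ one checks $\tfrac{N}{2}E(t)\le\mathcal{L}(t)\le\tfrac{3N}{2}E(t)$ once $N$ is large enough, uniformly in $(\tau_{q},\mu^{2})$. Combining \eqref{dissipation.law} with the bound on $\mathcal{F}'$ and choosing $N$ so that the $\|q\|^{2}$ and $\mu^{2}\|q_{x}\|^{2}$ terms are absorbed gives $\frac{d}{dt}\mathcal{L}(t)\le -c'\big(\|T\|^{2}+\tau_{q}\|q\|^{2}\big)+\|C_{T}\|_{\infty}\le -\omega\,\mathcal{L}(t)+\|C_{T}\|_{\infty}$, with $c',\omega>0$ uniform as $(\tau_{q},\mu^{2})\to(0,0)$. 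A Gronwall argument then yields $\mathcal{L}(t)\le\mathcal{L}(0)e^{-\omega t}+\omega^{-1}\|C_{T}\|_{\infty}$, and passing back through $\mathcal{L}\sim E$ (whose equivalence constants are uniform, since the weight $\tau_{q}/(2k)$ in $E$ cancels against the $\tfrac{\tau_{q}l}{2}\|q\|^{2}$ coming from $\mathcal{F}$) proves (i); when $m_{0}=0$ we have $C_{T}\equiv0$ and the same chain gives (ii).

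The main obstacle, beyond the correct choice of multiplier $w$ — chosen precisely so that the surviving boundary term is the quantity $C_{T}$ appearing in the statement — is verifying that $\|C_{T}\|_{\infty}<\infty$ uniformly as $\mu^{2}\to0$: one cannot control the trace $q_{x}(0,t)$ alone, so instead I would set $\eta:=\mu^{2}q_{x}-kT$, use \eqref{Eq2} to write $\eta_{x}=\tau_{q}q_{t}+q$ and \eqref{CCont1} to get $\int_{0}^{l}\eta\,dx=-km_{0}$, and then pin down $\eta(0,t)$ through the contraction estimates $\|U(t)\|_{\mathcal{H}}\le\|U_{0}\|_{\mathcal{H}}$ and $\|U_{t}(t)\|_{\mathcal{H}}=\|\mathcal{A}U(t)\|_{\mathcal{H}}\le\|\mathcal{A}U_{0}\|_{\mathcal{H}}$, which force $\|q(t)\|$ and $\tau_{q}\|q_{t}(t)\|$ to be bounded uniformly in $t$ and in $(\tau_{q},\mu^{2})$, whence $|C_{T}(t)|\le C|m_{0}|$. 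The remaining uniformity in $(\tau_{q},\mu^{2})$ of $N$, $\omega$ and the various Young, Poincar\'e and equivalence constants is then bookkeeping: since $\tau_{q},\mu^{2}$ are bounded above and the $\tau_{q}$-weights cancel as noted, every constant can be taken independent of them, so in particular $\omega$ does not degenerate in the limit.
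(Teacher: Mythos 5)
Your proposal is correct and follows essentially the same route as the paper: the same multiplier $w(x,t)=\int_x^l T(s,t)\,\textrm{d}s$ producing the same boundary term $C_T$, a Lyapunov functional of the form (large constant)$\,\times E$ plus the cross term $\tau_q\langle q,w\rangle$, equivalence with $E$ via Poincar\'e/Young with constants uniform in $(\tau_q,\mu^2)$, and a Gronwall argument. The only differences are cosmetic --- the paper folds $\frac{\rho c}{2}\|\int_x^l T\,\textrm{d}s\|^2+\frac{\rho c}{2}\mu^2\|T\|^2$ into its auxiliary functional $F$ so the cross terms cancel exactly instead of being absorbed by Young's inequality --- plus your added sketch of why $\|C_T\|_\infty$ is uniformly finite, a point the paper merely asserts.
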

{%\color{blue}
From a physical point of view, Theorem \ref{main.result.exponential.decay} separates two cases of initial conditions. The first item, \textbf{$(i)$}, describes a physically realistic situation in which the initial total energy content of the system (integration of the initial temperature distribution) is not zero, therefore the temperature history must converge to the equilibrium prescribed by that energy content for an insulated (thermally closed) system. 

The second part, \textbf{$(ii)$}, however, is only for the mathematical completeness, since that situation is not physically realizable. The temperature - when calculating the energy content - must be measured in Kelvin degrees, therefore the zero initial energy content can be achieved only by uniformly zero temperature distribution, otherwise one implies negative absolute temperature. In the following, both parts are proved, but we emphasize that only part  \textbf{$(i)$} has practical importance.}

\begin{proof}
Integrating Eq. (\ref{Eq1}) over $[x,l]\subset[0,l]$, multiplying by $\int_{x}^{l}T(s,t)ds$ and integrating over $ [0,l] $ we obtain	
\begin{eqnarray}
\rho c \ \big<\int_{x}^{l}T_{t}(s,t)\textrm{d}s, \, \int_{x}^{l}T(s,t)\textrm{d}s\big>-\big<q, \, \int_{x}^{l}T(s,t)\textrm{d}s\big>=0. \label{p1}
\end{eqnarray}	
On the other hand, multiplying Eq. $(\ref{Eq2})$ by $\int_{x}^{l}T(s,t)\textrm{d}s$ and integrating over $ [0,l] $ we have
\begin{eqnarray}	
\tau_{q}\big<q_{t}, \,\int_{x}^{l}T(s,t)\textrm{d}s\big>+\big<q, \,\int_{x}^{l}T(s,t)\textrm{d}s\big>+\big(\mu^{2}q_{x}(0,t)-kT(0,t)\big)\int^{l}_{0}T(s,t)\textrm{d}s-\big<\mu^{2}q_{x}-kT, \,T\big>=0. \label{p2}
\end{eqnarray}	
Adding Eqs.~\eqref{p1} and \eqref{p2}, and considering that
$\big<\int_{x}^{l}T_{t}(s,t)\textrm{d}s, \, \int_{x}^{l}T(s,t)\textrm{d}s\big>=\frac{1}{2}\frac{\textrm{d}}{\textrm{d}t}\big\|\int_{x}^{l}T(s,t)\textrm{d}s\big\|^{2}$, we achieve
\begin{eqnarray}\label{eq.I}
\frac{\textrm{d}}{\textrm{d}t}\frac{\rho c}{2}\big\|\int_{x}^{l}T(s,t)\textrm{d}s\big\|^{2}
+\underbrace{\tau_{q}\big<q_{t}, \,\int_{x}^{l}T(s,t)\textrm{d}s\big>-\mu^{2}\big<q_{x}, \,T\big>}_{I:=}+\big(\mu^{2}q_{x}(0,t)-kT(0,t)\big)\int^{l}_{0}T(s,t)\textrm{d}s
+k\|T\|^{2}=0.
\end{eqnarray}
Exploiting the identity $\big<q_{t}, \,\int_{x}^{l}T(s,t)ds\big>=\frac{\textrm{d}}{\textrm{d}t}\big<q, \,\int_{x}^{l}T(s,t)ds\big>-\big<q, \,\int_{x}^{l}T_{t}(s,t)\textrm{d}s\big>$ together with the Eq. (\ref{Eq1}), it yields
\begin{eqnarray}
\nonumber I&:=&\tau_{q}\big<q_{t}, \,\int_{x}^{l}T(s,t)\textrm{d}s\big>-\mu^{2}\big<q_{x}, \,T\big>
\\
\nonumber&=&\frac{\textrm{d}}{\textrm{d}t}\tau_{q}\big<q, \,\int_{x}^{l}T(s,t)\textrm{d}s\big>-\tau_{q}\big<q, \,\int_{x}^{l}T_{t}(s,t)\textrm{d}s\big>+\mu^{2}\rho c\big<T_{t}, \, T\big>
\\
\nonumber&=&\frac{\textrm{d}}{\textrm{d}t}\bigg(\tau_{q}\big<q, \,\int_{x}^{l}T(s,t)\textrm{d}s\big>+\frac{\rho c}{2}\mu^{2}\|T\|^{2}\bigg)-\frac{\tau_{q}}{\rho c}\|q\|^{2}.
\end{eqnarray}	
Substituting $I$ into $(\ref{eq.I})$ we obtain
\begin{eqnarray}\label{F}
\frac{\textrm{d}}{\textrm{d}t}F(t)=-k\|T\|^{2}+\frac{\tau_{q}}{\rho c}\|q\|^{2}+C_{T}(t),
\end{eqnarray}
in which $C_{T}(t):=\big(\mu^{2}q_{x}(0,t)-kT(0,t)\big)\int_{0}^{l}T(s,t)\textrm{d}s$, and
\begin{eqnarray}
F(t):=\frac{\rho c}{2}\big\|\int_{x}^{l}T(s,t)\textrm{d}s\big\|^{2}+\frac{\rho c}{2}\mu^{2}\|T\|^{2}
+\tau_{q}\big<q, \,\int_{x}^{l}T(s,t)\textrm{d}s\big>.
\end{eqnarray}

Now, let us consider the following Lyapunov functional,
\begin{eqnarray}
\mathcal{L}(t):=\Big(2l^2+2\mu^{2}+\frac{\tau_{q}k}{\rho c}\Big)E(t)+F(t),\quad \mbox{for all} \quad t\geq 0.
\end{eqnarray}
Utilizing Cauchy–Schwarz and Young inequalities, we obtain
\begin{eqnarray}
\nonumber\Big|\mathcal{L}(t)-\Big(2l^2+2\mu^{2}+\frac{\tau_{q}k}{\rho c}\Big)E(t)\Big|&\leq&\frac{\rho c}{2}\big\|\int_{x}^{l}T(s,t)\textrm{d}s\big\|^{2}+\frac{\rho c}{2}\mu^{2}\|T\|^{2}
+\tau_{q}\Big|\big<q, \,\int_{x}^{l}T(s,t)\textrm{d}s\big>\Big|
\\
\nonumber&\leq&\frac{\rho c}{2}\big\|\int_{x}^{l}T(s,t)\textrm{d}s\big\|^{2}+\frac{\rho c}{2}\mu^{2}\|T\|^{2}
+\frac{\tau_{q}}{4\varepsilon}\|q\|^{2}+\tau_{q}\varepsilon\big\|\int_{x}^{l}T(s,t)\textrm{d}s\big\|^{2},
\end{eqnarray}
for all $\varepsilon>0$. Choosing $\varepsilon:=k/2l^{2}$ and applying Wirtinger's inequality (see Corollary 3, \cite{Swanson}), it leads to the following inequality:
\begin{eqnarray}
\nonumber\Big|\mathcal{L}(t)-\Big(2l^2+2\mu^{2}+\frac{\tau_{q}k}{\rho c}\Big)E(t)\Big|&\leq&\Big(\frac{\rho c}{2}l^2+\frac{\rho c}{2}\mu^{2}+\frac{\tau_{q}k}{2}\Big)\|T\|^{2}
+\frac{\tau_{q}}{2k}l^{2}\|q\|^{2}
\\
\nonumber&\leq&\Big(l^2+\mu^{2}+\frac{\tau_{q}k}{\rho c}\Big)\frac{\rho c}{2}\|T\|^{2}
+\frac{\tau_{q}}{2k}l^{2}\|q\|^{2}
\\
&\leq&\Big(l^2+\mu^{2}+\frac{\tau_{q}k}{\rho c}\Big)E(t).
\end{eqnarray}
Consequently, 
\begin{eqnarray}\label{auxil1}
\Big(l^2+\mu^{2}\Big)E(t)\leq \mathcal{L}(t)\leq \Big(3l^2+3\mu^{2}+2\frac{\tau_{q}k}{\rho c}\Big)E(t),\quad \mbox{for all} \quad t\geq 0.
\end{eqnarray}
Using Eqs.~\eqref{dissipation.law} and (\ref{F}), we have 
\begin{eqnarray}
\frac{\textrm{d}}{\textrm{d}t}\mathcal{L}(t)
\leq-k\|T\|^{2}-\frac{2}{k}\big(l^2+\mu^{2}\big)\|q\|^{2}+|C_{T}(t)|.
\end{eqnarray}
{\color{magenta}We choose} $\beta:=\min\big\{2k/\rho c, \, 4(l^{2}+\mu^{2})/\tau_{q}\big\}$ such that,
\begin{eqnarray}
\nonumber\frac{\textrm{d}}{\textrm{d}t}\mathcal{L}(t)
\leq-\beta E(t)+|C_{T}(t)|.
\end{eqnarray}
Utilizing the second inequality in \eqref{auxil1}, we obtain
\begin{eqnarray}\label{est.L}
\frac{\textrm{d}}{\textrm{d}t}\mathcal{L}(t)
\leq-\omega\mathcal{L}(t)+|C_{T}(t)|,\quad \mbox{for all} \quad t\geq 0,
\end{eqnarray}
where $\omega:=\frac{\beta}{3l^2+3\mu^{2}+2\tau_{q}k/\rho c}$. Multiplying $(\ref{est.L})$ by $e^{\omega t}$ yields
\begin{eqnarray}\label{L.01}
\frac{\textrm{d}}{\textrm{d}t}\Big(\mathcal{L}(t)e^{\omega t}\Big)
\leq|C_{T}(t)|e^{\omega t}.
\end{eqnarray}
Integrating (\ref{L.01}) over $[0,t]$,
\begin{eqnarray}
\mathcal{L}(t)\leq\mathcal{L}(0)e^{-\omega t}+\int_{0}^{t}|C_{T}(r)|e^{-\omega (t-r)}\textrm{d}r,\quad \mbox{for all} \quad t\geq 0,
\end{eqnarray}
where $r\in[0,t]$. Choosing  $M:=\frac{3l^2+3\mu^{2}+2\tau_{q}k/\rho c}{l^2+\mu^{2}}>1$,   $\gamma_{0}:=(l^2+\mu^{2})^{-1}$  and considering \eqref{auxil1}, we finally obtain
\begin{eqnarray}\label{est.energy}
E(t)\leq ME(0)e^{-\omega t}+\gamma_{0}\int_{0}^{t}|C_{T}(r)|e^{-\omega(t-r)}\textrm{d}r.
\end{eqnarray}
Since $C_{T}\in L^{\infty}\big((0,\infty), L^{2}(0,l)\big)$ and $T_{0}\notin L_{*}^{2}(0,l)$ we have
\begin{eqnarray}
\nonumber E(t) &\leq&  ME(0)e^{-\omega t}+(1-e^{-\omega t})\frac{\gamma_{0}}{\omega}\|C_{T}\|_{\infty}
\\
&\leq&  ME(0)e^{-\omega t}+2\frac{\gamma_{0}}{\omega}\|C_{T}\|_{\infty}.
\end{eqnarray}
This proves $(i)$. On the other hand, if $T_{0}\in L_{*}^{2}(0,l)$,  implies that $|C_{T}(t)|=0$. From (\ref{est.energy}), it immediately leads to the proof of $(ii)$.
\end{proof}

\begin{remark}
The dynamics of the system \eqref{Eq1}--\eqref{CInicial} determined by the GK equation, can be reduced to the dynamics of the system determined by the Fourier equation when
$(\tau_{q}, \mu^{2})\rightarrow (0,0)$, \textit{i.e.},
\begin{align}
\rho c T_{t}+q_{x}&=0 \quad \mbox{in} \quad (0,l) \times (0,\infty), 
\\
q+kT_{x}&=0 \quad \mbox{in} \quad (0,l) \times (0,\infty).
\end{align}
This means that the results of Theorems \ref{WP} and \ref{main.result.exponential.decay} are still valid in the limit as $(\tau_{q}, \mu^{2})\rightarrow (0,0)$.
\end{remark}
}

%================================================%
%================================================%
\section{A fully discrete finite difference model}
%================================================%
%================================================%
In this section, we consider an  implicit time integration method by finite differences applied to \eqref{Eq1}--\eqref{CInicial} system { in $ (0, l) \times (0,\mathbf{T})$}. We are mainly concerned with showing that numerical energy preserves the properties of the energy functional \eqref{energy} from being positive and non-increasing. For this reason, consider $ J,N \in \mathbb{N} $. 

We set $ \Delta x=\displaystyle \frac{l}{J+1}$  and $  \Delta t=\displaystyle \frac{\mathbf{T}}{N+1} $ and introduce the nets
\begin{eqnarray}
&&0=x_{0}<x_{1}=\Delta x < ...< x_{j}=j\Delta x<x_{J}<x_{J+1}=(J+1)\Delta x=l, 
\\
&&0=t_{0}<t_{1}=\Delta t < ...< t_{n}=n\Delta t<t_{N}<t_{N+1}=(N+1)\Delta t=\mathbf{T}.
\end{eqnarray}

We consider the following finite-diference discretization of (\ref{Eq1})--(\ref{CInicial}). For $ j=1,2,...,J $ and $ n=1,2,...,N$ we have
\begin{eqnarray}
&&\rho c \frac{T_{j}^{n}-T_{j}^{n-1}}{\Delta t}+\frac{q_{j+1}^{n}-q_{j}^{n}}{\Delta x}=0, \label{Eq.num1}
\\
&&\tau_{q}\frac{q_{j}^{n}-q_{j}^{n-1}}{\Delta t}+q_{j}^{n}-\mu^{2}\frac{q_{j+1}^{n}-2q_{j}^{n}+q_{j-1}^{n}}{\Delta x^{2}}+k\frac{T_{j}^{n}-T_{j-1}^{n}}{\Delta x}=0.  \label{Eq.num2}
\end{eqnarray}
We denote by $(T_{j}^{n}, q_{j}^{n})$ the numerical approximation for the solutions $(T, q)$ in points $(x_{j},t_{n})$ of the mesh. For boundary conditions, we use
\begin{eqnarray}\label{CCDiscretEqDiscret.Total}
q_{0}^{n}=q_{J+1}^{n}=0, \quad \mbox{for all} \quad n=0,1,...,N+1 
\end{eqnarray}
and for the for the initial conditions, we adopted
\begin{eqnarray}
T_{j}^{0}=T_{0j}, \quad q_{j}^{0}=q_{0j}, \quad \mbox{for all} \quad j=0,1,...,J+1. \label{CIDiscretEqDiscret.Total.}
\end{eqnarray}

%-------------------------------------------------%
%-------------------------------------------------%
\subsection{Discrete functional energy}
%-------------------------------------------------%
%-------------------------------------------------%
In this subsection we analyze in detail some properties of the energy of the discrete system   (\ref{Eq.num1})--(\ref{CIDiscretEqDiscret.Total.}). We  define the functional energy of (\ref{Eq.num1})--(\ref{CIDiscretEqDiscret.Total.}) by
\begin{eqnarray}\label{numerical.energy}
\displaystyle\mathbf{E^{n}}:=\rho c\frac{\Delta x}{2}\sum_{j=0}^{J}|T_{j}^{n}|^{2}+\frac{\tau_{q}}{k}\frac{\Delta x}{2}\sum_{j=0}^{J}|q_{j}^{n}|^{2}, \quad \mbox{for all} \quad n=1,...,N+1.
\end{eqnarray}

\begin{theorem}\label{teo.energy.law}
Let $(T_{j}^{n}, q_{j}^{n})$ the numerical solution of the problem (\ref{Eq.num1})--(\ref{CIDiscretEqDiscret.Total.}). Thus, for all $ \Delta x,\Delta t \in (0,1) $ the rate of change of the numerical energy $ \mathbf{E^{n}} $ in $ (\ref{numerical.energy}) $ at the instant $ t_{n} $ is given by
\begin{eqnarray}
\frac{ \mathbf{E^{n}}- \mathbf{E^{n-1}}}{\Delta t}\leq-\frac{1}{k}\Delta x\sum_{j=0}^{J}|q_{j}^{n}|^{2}-\frac{\mu^{2}}{k}\Delta x\sum_{j=0}^{J}\bigg|\frac{q_{j+1}^{n}-q_{j}^{n}}{\Delta x}\bigg|^{2}< 0,
\end{eqnarray}
for all $n=1,2,...,N+1$.
\end{theorem}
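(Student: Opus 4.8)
The plan is to mimic, at the discrete level, the continuous energy identity \eqref{dissipation.law}. The natural strategy is to test the discrete equations \eqref{Eq.num1}--\eqref{Eq.num2} against the obvious discrete analogues of the continuous multipliers used to derive the energy law: namely, multiply \eqref{Eq.num1} by $\rho c\, T_j^n\,\Delta x$ and \eqref{Eq.num2} by $\tfrac{1}{k}\, q_j^n\,\Delta x$, sum over $j=0,\dots,J$, and add. Two discrete tools will do all the work. First, the elementary inequality $a(a-b)\geq \tfrac12(a^2-b^2)$ (equivalently $2a(a-b)=a^2-b^2+(a-b)^2\geq a^2-b^2$), applied to the time-difference terms $T_j^n(T_j^n-T_j^{n-1})$ and $q_j^n(q_j^n-q_j^{n-1})$, will produce $\mathbf{E^n}-\mathbf{E^{n-1}}$ on the left (after division by $\Delta t$), with the leftover squared differences $(T_j^n-T_j^{n-1})^2$ and $(q_j^n-q_j^{n-1})^2$ simply discarded since they are nonnegative and appear with the right sign. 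Second, a discrete summation-by-parts (Abel summation) applied to the spatial-difference terms, using the homogeneous boundary values $q_0^n=q_{J+1}^n=0$ from \eqref{CCDiscretEqDiscret.Total.}, will make the coupling terms cancel and turn the $\mu^2$ term into the negative sum of squared forward differences $\big|\tfrac{q_{j+1}^n-q_j^n}{\Delta x}\big|^2$.

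Concretely, the steps in order are: (1) From \eqref{Eq.num1}, $\rho c\, T_j^n(T_j^n-T_j^{n-1}) = -\Delta t\, T_j^n\,\tfrac{q_{j+1}^n-q_j^n}{\Delta x}$; multiply by $\Delta x$ and sum over $j$, bounding the left side below by $\tfrac{\rho c}{2}(|T_j^n|^2-|T_j^{n-1}|^2)$ summed. (2) From \eqref{Eq.num2}, multiply by $\tfrac{1}{k}q_j^n\,\Delta x$ and sum over $j$; treat the time term the same way, keep $\tfrac1k\sum|q_j^n|^2\Delta x$ as is, rewrite the $\mu^2$ term via summation by parts as $\tfrac{\mu^2}{k}\sum_{j=0}^{J}\big|\tfrac{q_{j+1}^n-q_j^n}{\Delta x}\big|^2\Delta x$ (with boundary terms vanishing), and leave the coupling term $\tfrac1k\sum k\, q_j^n\,\tfrac{T_j^n-T_{j-1}^n}{\Delta x}\Delta x$. (3) Add the two resulting inequalities; the coupling term from step (2), after a shift of summation index and using $q_0^n=q_{J+1}^n=0$, exactly cancels the spatial term $\rho c$-side term $\sum T_j^n\tfrac{q_{j+1}^n-q_j^n}{\Delta x}\Delta x$ from step (1). (4) Divide by $\Delta t$ to obtain the stated inequality, with the strict sign coming from $\tfrac1k\sum|q_j^n|^2\Delta x>0$ unless $q^n\equiv 0$ (one should remark that the inequality is strict in the generic case, or interpret "$<0$" as in \eqref{dissipation.law}).

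The main obstacle — really the only place that requires care rather than routine bookkeeping — is getting the index ranges in the two summation-by-parts steps to match up so that the coupling terms cancel \emph{exactly} and all boundary contributions are annihilated by \eqref{CCDiscretEqDiscret.Total.}. The term $\sum_{j=0}^{J} T_j^n(q_{j+1}^n-q_j^n)$ and the term $\sum_{j=1}^{J} q_j^n(T_j^n-T_{j-1}^n)$ are related by $\sum_{j=0}^{J} T_j^n q_{j+1}^n - \sum_{j=0}^{J}T_j^n q_j^n = \sum_{j=1}^{J+1} T_{j-1}^n q_j^n - \sum_{j=0}^{J}T_j^n q_j^n$, and using $q_0^n=q_{J+1}^n=0$ the stray endpoint terms drop out, leaving exactly $-\sum_{j=1}^{J} q_j^n(T_j^n-T_{j-1}^n)$; so the cancellation is clean. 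The $\mu^2$ summation by parts, $-\sum_{j=1}^{J} q_j^n (q_{j+1}^n-2q_j^n+q_{j-1}^n) = \sum_{j=0}^{J}(q_{j+1}^n-q_j^n)^2$ again thanks to the vanishing boundary flux, is the standard discrete Dirichlet-energy identity. Once these two identities are in hand, the rest is immediate.
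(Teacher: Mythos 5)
Your proposal is correct and follows essentially the same route as the paper's proof: the same multipliers $\rho c\,T_j^n\Delta x$ and $k^{-1}q_j^n\Delta x$, the same lower bound on the time-difference terms (the paper invokes Young's inequality where you use the equivalent identity $2a(a-b)=a^2-b^2+(a-b)^2$), and the same summation-by-parts cancellation of the coupling terms using $q_0^n=q_{J+1}^n=0$. Your remark that the final ``$<0$'' is only valid when $q^n\not\equiv 0$ is a fair point that the paper does not address.
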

\begin{proof}
Multiplying the Eq. $(\ref{Eq.num1})$ by $\Delta xT_{j}^{n}$ and adding to $ j=0,1,...,J $ we have
\begin{eqnarray}
\rho c\Delta x \sum_{j=0}^{J}\bigg(\frac{T_{j}^{n}-T_{j}^{n-1}}{\Delta t}T_{j}^{n}\bigg)+\Delta x \sum_{j=0}^{J}\bigg(\frac{q_{j+1}^{n}-q_{j}^{n}}{\Delta x}T_{j}^{n}\bigg)=0.
\end{eqnarray}
Using Young's inequality we have
\begin{eqnarray}\label{desig.01}
\rho c\frac{\Delta x}{2\Delta t} \sum_{j=0}^{J}|T_{j}^{n}|^{2}-\rho c\frac{\Delta x}{2\Delta t} \sum_{j=0}^{J}|T_{j}^{n-1}|^{2}+\Delta x \sum_{j=0}^{J}\bigg(\frac{q_{j+1}^{n}-q_{j}^{n}}{\Delta x}T_{j}^{n}\bigg)\leq0.
\end{eqnarray}
Similarly, multiplying the Eq. $(\ref{Eq.num2})$ by $k^{-1}\Delta xq_{j}^{n}$, adding to $ j=1,...,J $ and using the boundary conditions $q_{0}^{n}=0$ we have
\begin{eqnarray}\label{desig.02}
&&\nonumber\frac{\tau_{q}}{k}\frac{\Delta x}{2\Delta t} \sum_{j=0}^{J}|q_{j}^{n}|^{2}-\frac{\tau_{q}}{k}\frac{\Delta x}{2\Delta t} \sum_{j=0}^{J}|q_{j}^{n-1}|^{2}+\frac{1}{k}\Delta x\sum_{j=0}^{J}|q_{j}^{n}|^{2}
\\
&&\underbrace{-\frac{\mu^{2}}{k}\Delta x\sum_{j=1}^{J}\bigg(\frac{q_{j+1}^{n}-2q_{j}^{n}+q_{j-1}^{n}}{\Delta x^{2}}q_{j}^{n}\bigg)+\Delta x \sum_{j=1}^{J}\bigg(\frac{T_{j}^{n}-T_{j-1}^{n}}{\Delta x}q_{j}^{n}\bigg)}_{J:=}\leq0.
\end{eqnarray}

Using the boundary conditions (\ref{CCDiscretEqDiscret.Total}) we have
\begin{eqnarray}\label{J}
\nonumber J&:=&-\frac{\mu^{2}}{k}\Delta x\sum_{j=1}^{J}\bigg(\frac{q_{j+1}^{n}-2q_{j}^{n}+q_{j-1}^{n}}{\Delta x^{2}}q_{j}^{n}\bigg)+\Delta x \sum_{j=1}^{J}\bigg(\frac{T_{j}^{n}-T_{j-1}^{n}}{\Delta x}q_{j}^{n}\bigg)
\\
&=&\frac{\mu^{2}}{k}\Delta x\sum_{j=0}^{J}\bigg|\frac{q_{j+1}^{n}-q_{j}^{n}}{\Delta x}\bigg|^{2}-\Delta x \sum_{j=0}^{J}\bigg(T_{j}^{n}\frac{q_{j+1}^{n}-q_{j}^{n}}{\Delta x}\bigg).
\end{eqnarray}
From (\ref{desig.02}) and (\ref{J}) we obtain
\begin{eqnarray}\label{desig.03}
&&\nonumber\frac{\tau_{q}}{k}\frac{\Delta x}{2\Delta t} \sum_{j=0}^{J}|q_{j}^{n}|^{2}-\frac{\tau_{q}}{k}\frac{\Delta x}{2\Delta t} \sum_{j=0}^{J}|q_{j}^{n-1}|^{2}+\frac{1}{k}\Delta x\sum_{j=0}^{J}|q_{j}^{n}|^{2}+\frac{\mu^{2}}{k}\Delta x\sum_{j=0}^{J}\bigg|\frac{q_{j}^{n+1}-q_{j}^{n}}{\Delta x}\bigg|^{2}
\\
&&-\Delta x \sum_{j=0}^{J}\bigg(T_{j}^{n}\frac{q_{j+1}^{n}-q_{j}^{n}}{\Delta x}\bigg)\leq0.
\end{eqnarray}
Adding the inequalities (\ref{desig.01}) and (\ref{desig.03})  and using the energy $ \mathbf{E^{n}} $ in $ (\ref{numerical.energy}) $ we obtain
\begin{eqnarray}
\frac{\mathbf{E^{n}}-\mathbf{E^{n-1}}}{\Delta t}\leq-\frac{1}{k}\Delta x\sum_{j=0}^{J}|q_{j}^{n}|^{2}-\frac{\mu^{2}}{k}\Delta x\sum_{j=0}^{J}\bigg|\frac{q_{j+1}^{n}-q_{j}^{n}}{\Delta x}\bigg|^{2},
\end{eqnarray}
and therefore $\mathbf{E^{n}}\leq \mathbf{E^{0}}$ for all $n=1,2,...,N+1$. 
\end{proof}

%===========================================%
%===========================================%
\section{Numerical simulation}
%===========================================%
%===========================================%
Let us rewrite the scheme (\ref{Eq.num1})--(\ref{CIDiscretEqDiscret.Total.}) in an equivalent vectorial form. We denote by $\textbf{L}$ and $\textbf{I}_{m}$ the tridiagonal
matrix associated to the approximation of the Laplacian  and $m\times m$ identity matrix respectively, \textit{i.e.}, 
$$
\textbf{L}:=\left(\begin{array}{cccccc}
-2&  1&  0&  0&  \cdots& 0  \\
1&  -2&  1&  \ddots&  \ddots& \colon  \\
0&  1&  \ddots&  \ddots&  \ddots& 0  \\
0&  \ddots&  \ddots&  \ddots&  1& 0  \\
\colon&  \ddots&  \ddots&  1&  -2& 1 \\
0&  \cdots&  0&  0&  1& -2  
\end{array}\right)_{J\times J}, 
\qquad
\textbf{I}_{m}:=\left(\begin{array}{cccccc}
1&  0&  0&  0&  \cdots& 0  \\
0&  1&  0&  \ddots&  \ddots& \colon  \\
0&  0&  \ddots&  \ddots&  \ddots& 0  \\
0&  \ddots&  \ddots&  \ddots&  0& 0  \\
\colon&  \ddots&  \ddots&  0&  1& 0 \\
0&  \cdots&  0&  0&  0& 1  
\end{array}\right)_{m\times m}. 
$$
On the other hand, $\textbf{A}_{q}$ and $\textbf{A}_{T}$ are  trapezoidal matrices given by
$$
\textbf{A}_{q}:=\left(\begin{array}{cccccc}
1&  0&  0&  0&  \cdots& 0  \\
-1&  1&  0&  \ddots&  \ddots& \colon  \\
0&  -1&  \ddots&  \ddots&  \ddots& 0  \\
0&  \ddots&  \ddots&  \ddots&  0& 0  \\
\colon&  \ddots&  \ddots&  -1&  1& 0 \\
0&  \cdots&  0&  0&  -1& 1 \\
0&  \cdots&  0&  0&  0& -1   
\end{array}\right)_{J+1\times J}, \qquad \textbf{A}_{T}:=\left(\begin{array}{ccccccc}
-1&  1&  0&  0&  \cdots& 0  & 0 \\
0&  -1&  1&  \ddots&  \ddots& \colon  & 0   \\
0&  0&  \ddots&  \ddots&  \ddots& 0  & 0   \\
0&  \ddots&  \ddots&  \ddots&  1& 0  & 0   \\
\colon&  \ddots&  \ddots&  0&  -1& 1  & 0  \\
0&  \cdots&  0&  0&  0& -1  & 1    
\end{array}\right)_{J\times J+1}.
$$
Hence we define the mass matrices
\begin{eqnarray}
\textbf{B}:=\textbf{I}_{J}-\frac{\mu^{2}}{\tau_{q}+\Delta t}\frac{\Delta t}{\Delta x^{2}}\textbf{L}, \qquad \textbf{C}:=\textbf{I}_{J+1}+\frac{k}{\rho c(\tau_{q}+\Delta t)}\frac{\Delta t}{\Delta x^{2}}\textbf{A}_{q}\textbf{B}^{-1}\textbf{A}_{T}, \qquad \textbf{D}:=\textbf{A}_{q}\textbf{B}^{-1}.
\end{eqnarray}

If we denote $\mathbb{T}^{n}=(T_{0}^{n}, T_{1}^{n},...,T_{J}^{n})^{\top}$ and $\mathbb{Q}^{n}=(q_{1}^{n}, q_{2}^{n},...,q_{J}^{n})^{\top}$, then  scheme (\ref{Eq.num1})--(\ref{CIDiscretEqDiscret.Total.}) takes the following vectorial form:
\begin{eqnarray}\label{numerical.problem}
\begin{cases}
\displaystyle \mathbb{T}^{n}=\textbf{C}\mathbb{T}^{n-1}-\frac{\tau_{q}}{\rho c(\tau_{q}+\Delta t)}\frac{\Delta t}{\Delta x}\textbf{D}\mathbb{Q}^{n-1}, \quad n=1, 2, ...,N+1, 
\\
\displaystyle \mathbb{Q}^{n}=\frac{\tau_{q}}{\tau_{q}+\Delta t}\textbf{B}^{-1}\mathbb{Q}^{n-1}-\frac{k}{\tau_{q}+\Delta t}\frac{\Delta t}{\Delta x}\textbf{B}^{-1}\textbf{A}_{T}\mathbb{T}^{n-1}, \quad n=1, 2,...,N+1,
\\
\displaystyle \mathbb{T}^{0}=(T_{0}^{0}, T_{1}^{0},...,T_{J}^{0})^{\top},  \quad \mathbb{Q}^{0}=(q_{1}^{0}, q_{2}^{0},...,q_{J}^{0})^{\top}.
\end{cases}
\end{eqnarray}

Let us present a numerical example, especially with regard to the behavior of discrete energy $\textbf{E}^n$ given by \eqref{numerical.energy}, with aiming to confirm the analytical results established in previous sections. For our purposes, let us consider $l=1\cdot 10^{-1}$ m and for the computational mesh we take $\Delta x=2\cdot10^{-4}$ m, $\Delta t=1.2\cdot10^{-2}$ s. We use $\rho = 2\cdot 10^{3}$ kg/m$^3$, $c = 5\cdot 10^{2}$ J/(kg K), $\tau_{q} = 8\cdot 10^{-3}$ s, $\mu^{2} = 2.8\cdot 10^{-3}$ m$^2$, $k = 2 \cdot 10^{3}$ W/(m K) and we take the following initial conditions associated to the discrete system \eqref{numerical.problem}, following \cite{Kov22}:
\begin{eqnarray}\label{initial.conditions}
T_{j}^{0}:=T_{b}+\frac{T_{f}}{2}\cos\Big(\frac{\pi x_{j}}{l}\Big),  \quad {j=0, 1,...,J}.
\end{eqnarray}

{%\color{blue}
Below we provide the simulation of the solution and the energy functional obtained from the numerical scheme (\ref{numerical.problem}), considering two cases in which we solve the Fourier and Guyer-Krumhansl heat equations in regard to the Theorem \ref{main.result.exponential.decay} (item (i)). Since Theorem \ref{main.result.exponential.decay} (item (ii)) fals beyond the physical interest, and also appears as a special case of item (i), we do not deal with this situation. Although from the thermodynamic point of view the following simulations are straightforward, we present them in order to support the mathematical proofs, which properties are also preserved in the discrete case. Naturally, the solutions of the Guyer-Krumhansl and Fourier heat equations converges to the same equilibrium state with the same energy content, that stands as a thermodynamic requirement.

{%\color{blue}
\textbf{Case I:} We consider $T_{j}^{0}:=T_{b}+\frac{T_{f}}{2}\cos\big(\frac{\pi x_{j}}{l}\big)$ with $T_{b} = 1.5\cdot 10^{1}$ $^\circ$C and $T_{f} = 3\cdot 10^{1}$ $^\circ$C. This implies that $T_{j}^{0} \notin L_{*}^{2}(0,l)$. According to the Theorem \ref{main.result.exponential.decay} (item (i)) the energy functional decays exponentially to a positive constant. This is present in the simulation below.
}
\begin{figure}[htbp]
\centering
\begin{subfigure}{0.49\textwidth}
\includegraphics[width=\linewidth]{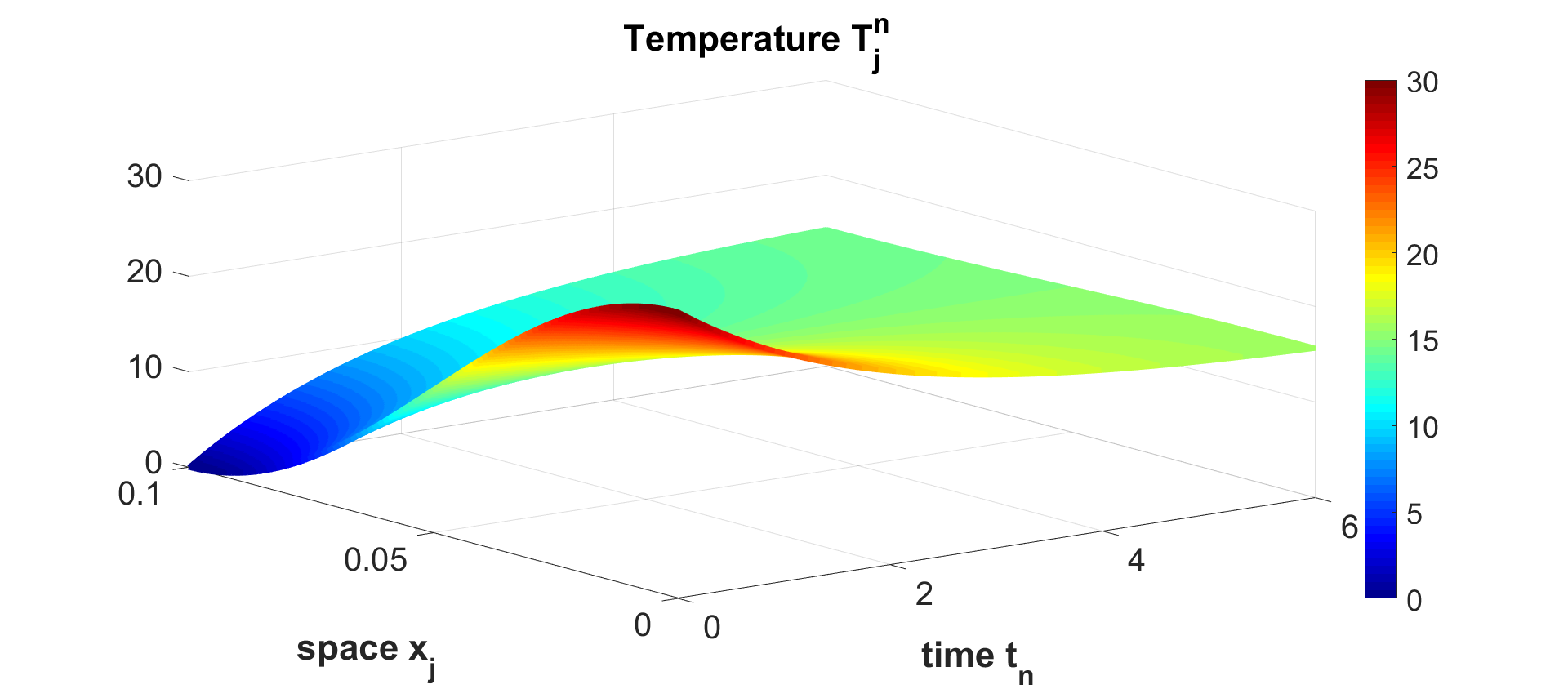}
\caption{} \label{fig:1a_GK}
\end{subfigure}
\begin{subfigure}{0.49\textwidth}
\includegraphics[width=\linewidth]{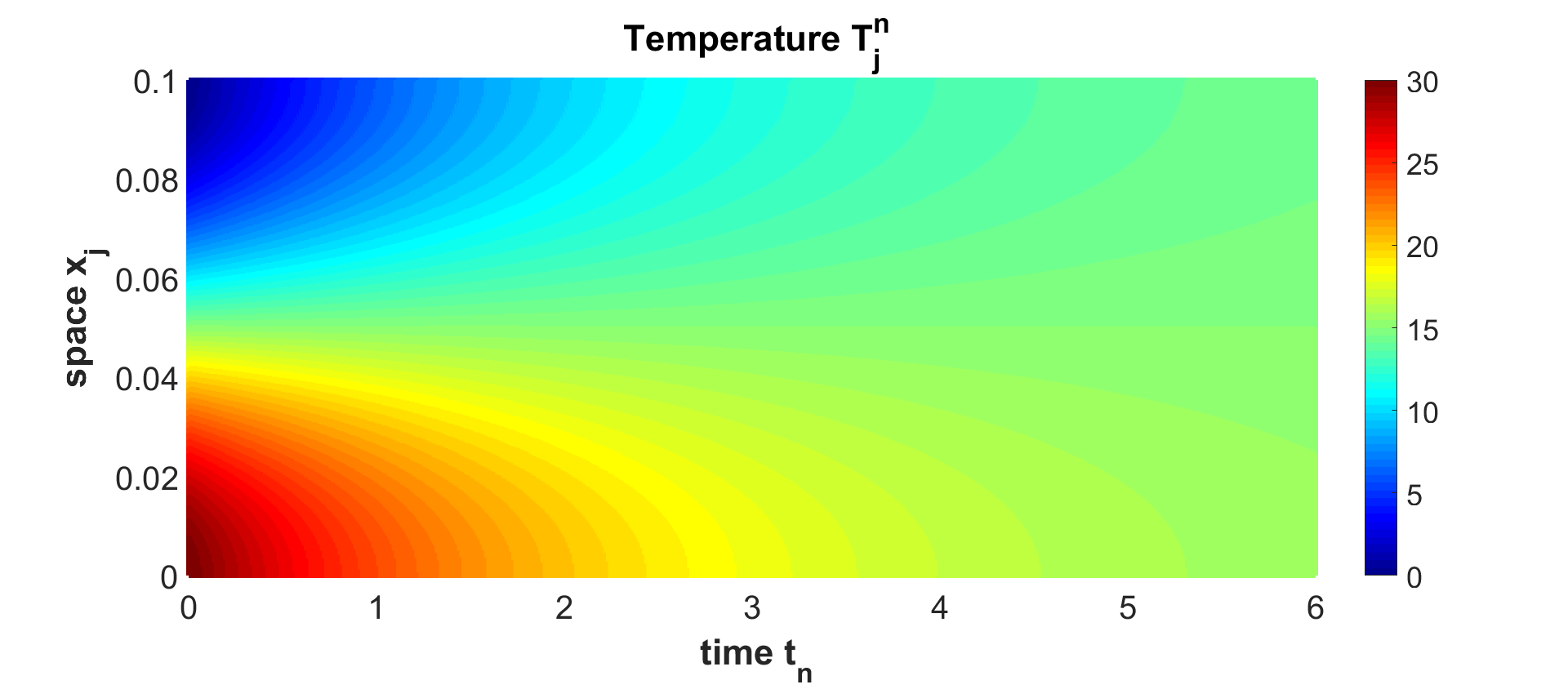}
\caption{} \label{fig:1b_GK}
\end{subfigure}
\begin{subfigure}{0.49\textwidth}
\includegraphics[width=\linewidth]{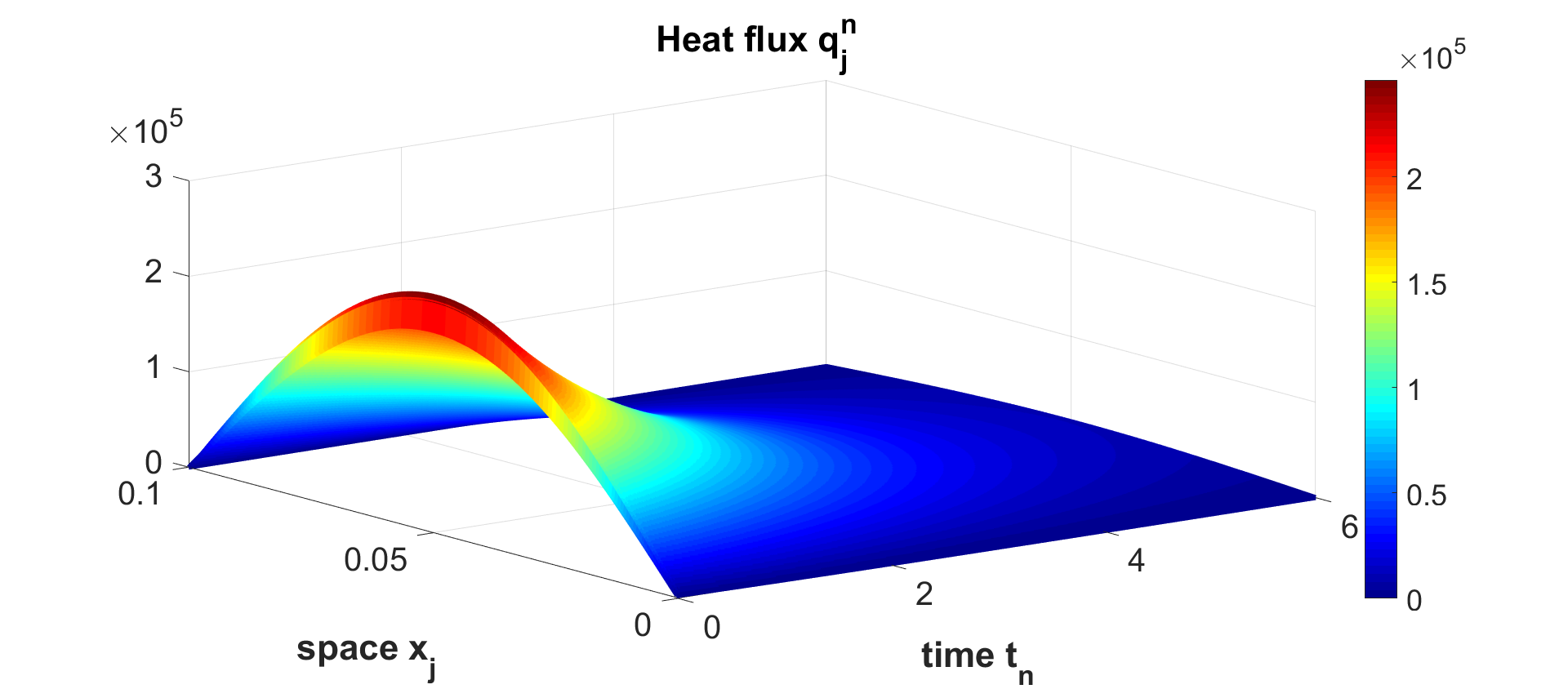}
\caption{} \label{fig:2a_GK}
\end{subfigure}
\begin{subfigure}{0.49\textwidth}
\includegraphics[width=\linewidth]{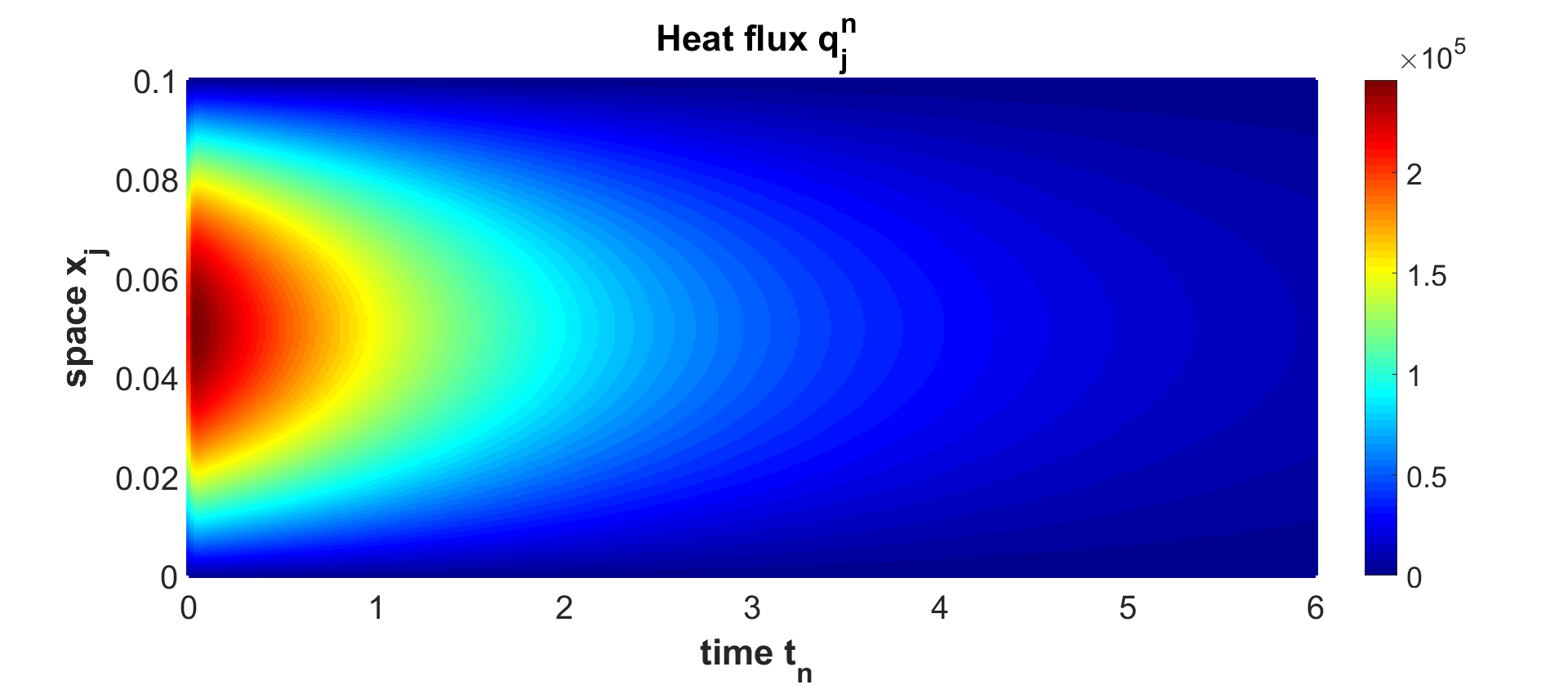}
\caption{} \label{fig:2b_GK}
\end{subfigure}
\begin{subfigure}{0.49\textwidth}
\includegraphics[width=\linewidth]{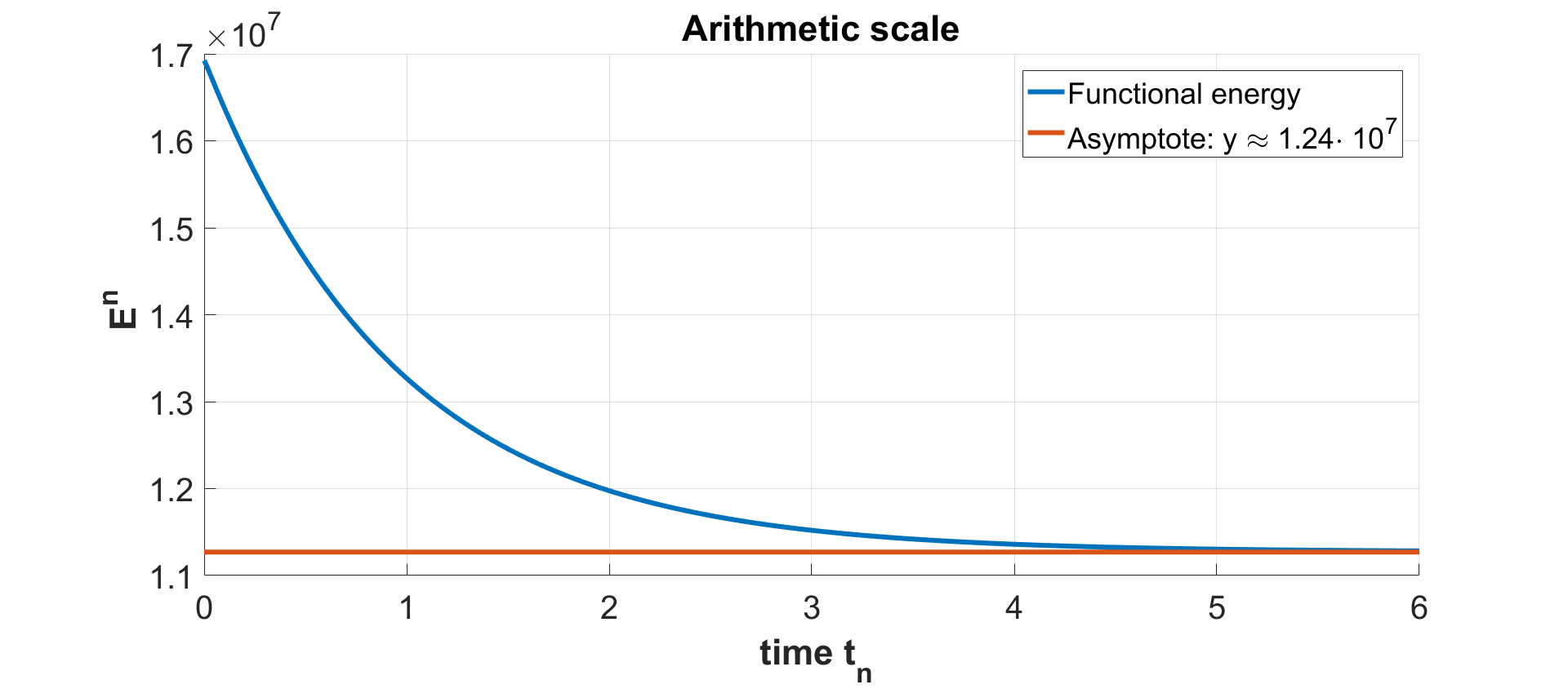}
\caption{} \label{fig:3a_GK}
\end{subfigure}
\begin{subfigure}{0.49\textwidth}
\includegraphics[width=\linewidth]{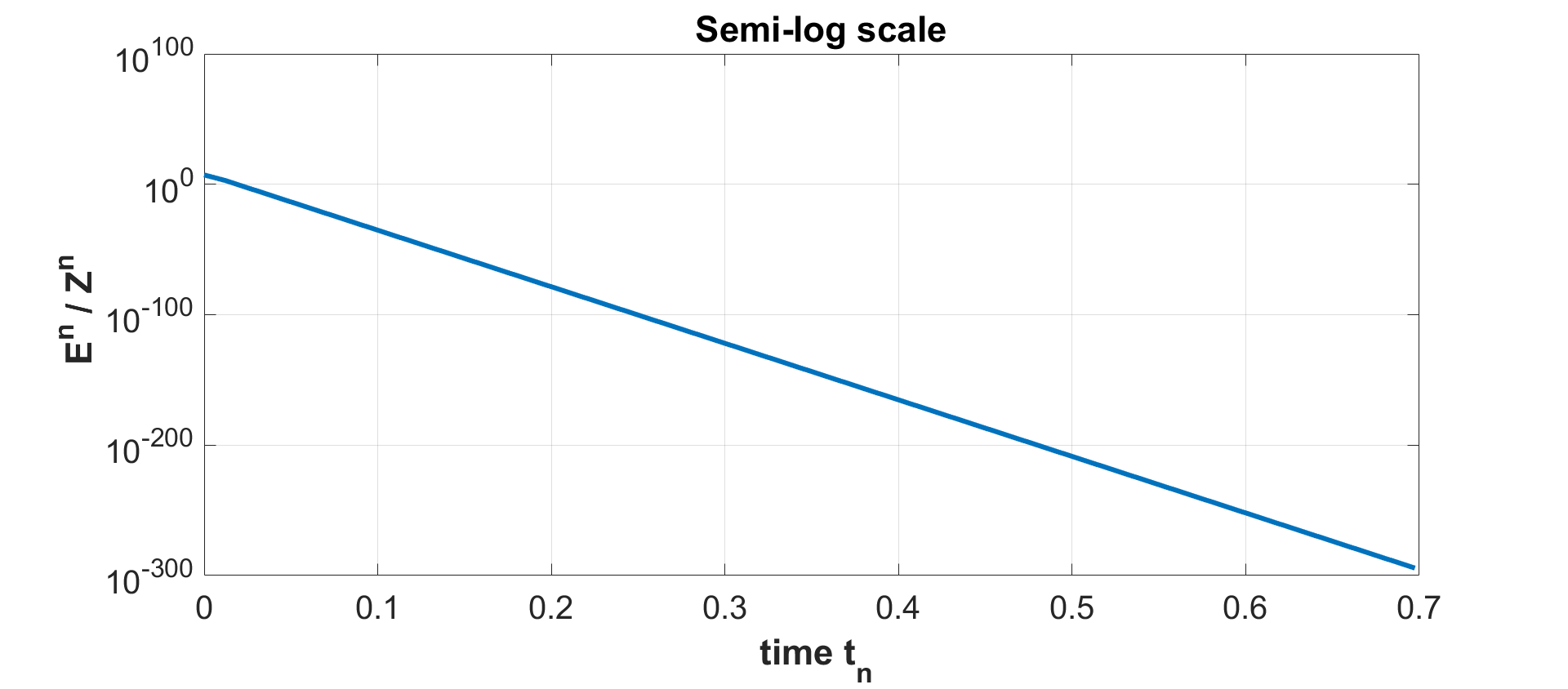}
\caption{} \label{fig:3b_GK}
\end{subfigure}
\caption{{%\color{blue}
Guyer–Krumhansl heat equation. We use $\tau_{q} = 8\cdot 10^{-3}$s, $\mu^{2} = 2.8\cdot 10^{-3}$ m$^2$ and $Z^{n}:=1+(M_{1}\|C_{T}\|_{\infty}\big/M_{0}E^{0})e^{\omega t_{n}}$. As $T_{0}\notin L_{*}^{2}(0,l)$ the temperature history reaches an equilibrium point (see (a) and (b)), exponential decay tends to a positive constant {%\color{blue
$ y \approx 1.24\cdot10^{7}$}.} }
\label{fig:2}
\end{figure}

\newpage

{%\color{blue}
%\textbf{Case II:} We consider $T_{j}^{0}:=T_{b}+\frac{T_{f}}{2}\cos\big(\frac{\pi x_{j}}{l}\big)$  with $T_{b} = 0$ $^\circ$C and $T_{f} = 3\cdot 10^{1}$ $^\circ$C. This implies that $T_{j}^{0} \in L_{*}^{2}(0,l)$. According to the Theorem \ref{main.result.exponential.decay} (item (ii)) the energy functional decays exponentially to zero. This is seen in the simulation below.
}
%\begin{figure}[htbp]
%\centering
%\begin{subfigure}{0.49\textwidth}
%\includegraphics[width=\linewidth]{figures/T_01_GK_case2}
%\caption{} \label{fig:1a_GK2}
%\end{subfigure}
%\begin{subfigure}{0.49\textwidth}
%\includegraphics[width=\linewidth]{figures/T_02_GK_case2}
%\caption{} \label{fig:1b_GK2}
%\end{subfigure}
%\begin{subfigure}{0.49\textwidth}
%\includegraphics[width=\linewidth]{figures/Q_01_GK_case2}
%\caption{} \label{fig:2a_GK2}
%\end{subfigure}
%\begin{subfigure}{0.49\textwidth}
%\includegraphics[width=\linewidth]{figures/Q_02_GK_case2}
%\caption{} \label{fig:2b_GK2}
%\end{subfigure}
%\begin{subfigure}{0.49\textwidth}
%\includegraphics[width=\linewidth]{figures/energy_GK_case2}
%\caption{} \label{fig:3a_GK2}
%\end{subfigure}
%\begin{subfigure}{0.49\textwidth}
%\includegraphics[width=\linewidth]{figures/log_energy_GK_case2}
%\caption{} \label{fig:3b_GK2}
%\end{subfigure}
%\caption{{%\color{blue}
%For the Guyer–Krumhansl heat equation, we use $\tau_{q} = 8\cdot 10^{-3}$s and $\mu^{2} = 2.8\cdot 10^{-3}$ m$^2$. As $T_{0}\in L_{*}^{2}(0,l)$, the temperature reaches the zero equilibrium point (see, for example, (a) and (b)) and the exponential decay tends to zero.} }
%\end{figure}

%\newpage

{%\color{blue}
The simulations for the Fourier heat equation are also obtained from the numerical scheme (\ref{numerical.problem}) by choosing $(\tau_{q}, \mu^{2})=(0,0)$. 
%\vspace{0.5cm}

\textbf{Case II:} Here, we consider the data from Case I with $\tau_{q} = \mu^{2} = 0$. According to the Theorem \ref{main.result.exponential.decay} (item (i)) the energy functional decays exponentially to a positive constant. 
}

\begin{figure}[htbp]
\centering
\begin{subfigure}{0.49\textwidth}
\includegraphics[width=\linewidth]{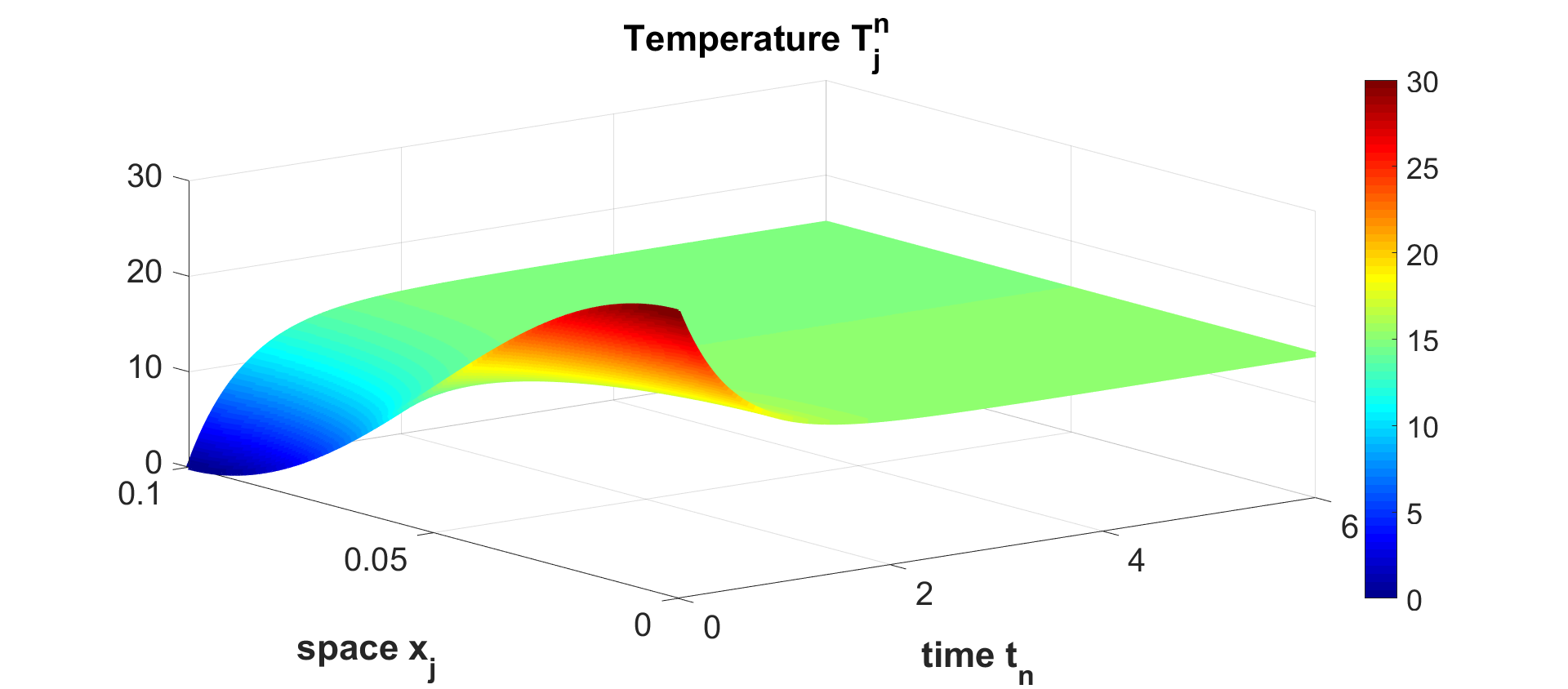}
\caption{} \label{fig:1a_FR}
\end{subfigure}
\begin{subfigure}{0.49\textwidth}
\includegraphics[width=\linewidth]{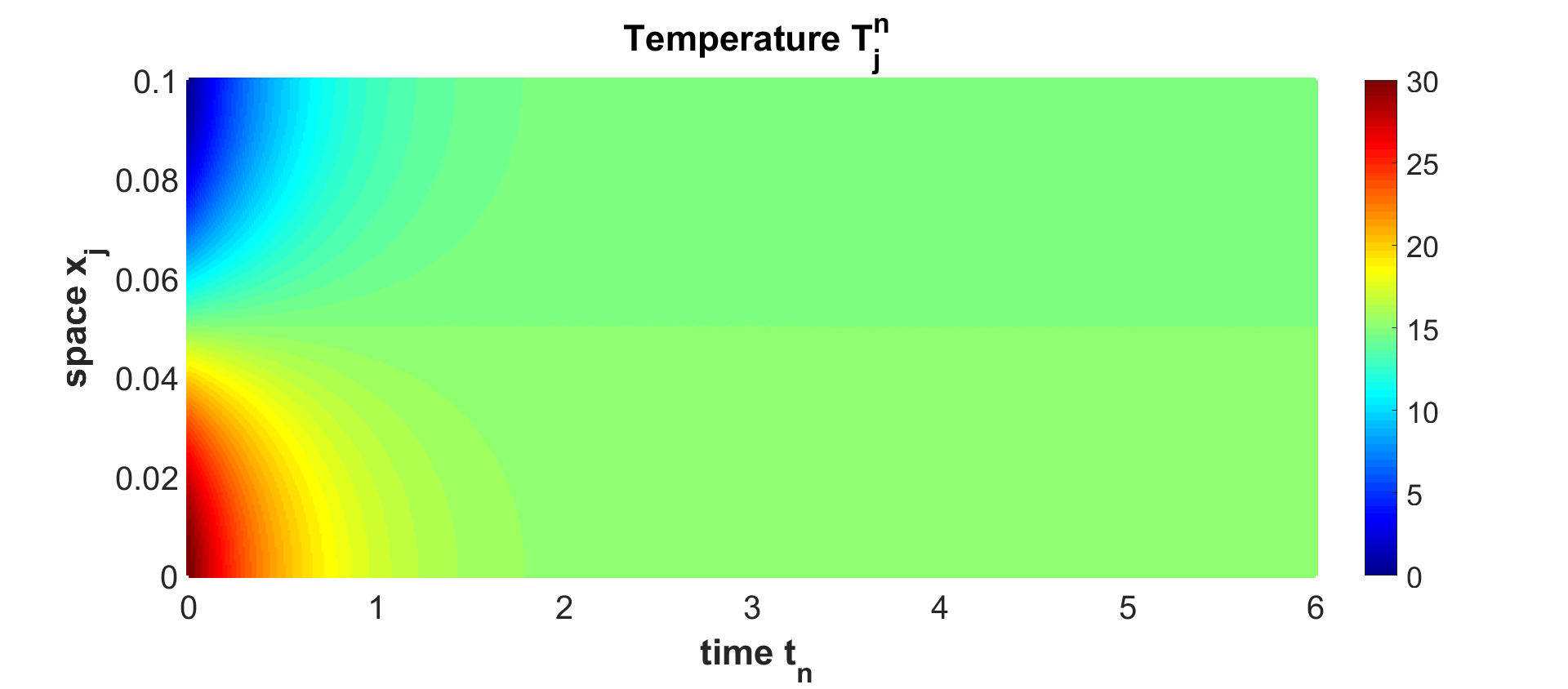}
\caption{} \label{fig:1b_FR}
\end{subfigure}
\begin{subfigure}{0.49\textwidth}
\includegraphics[width=\linewidth]{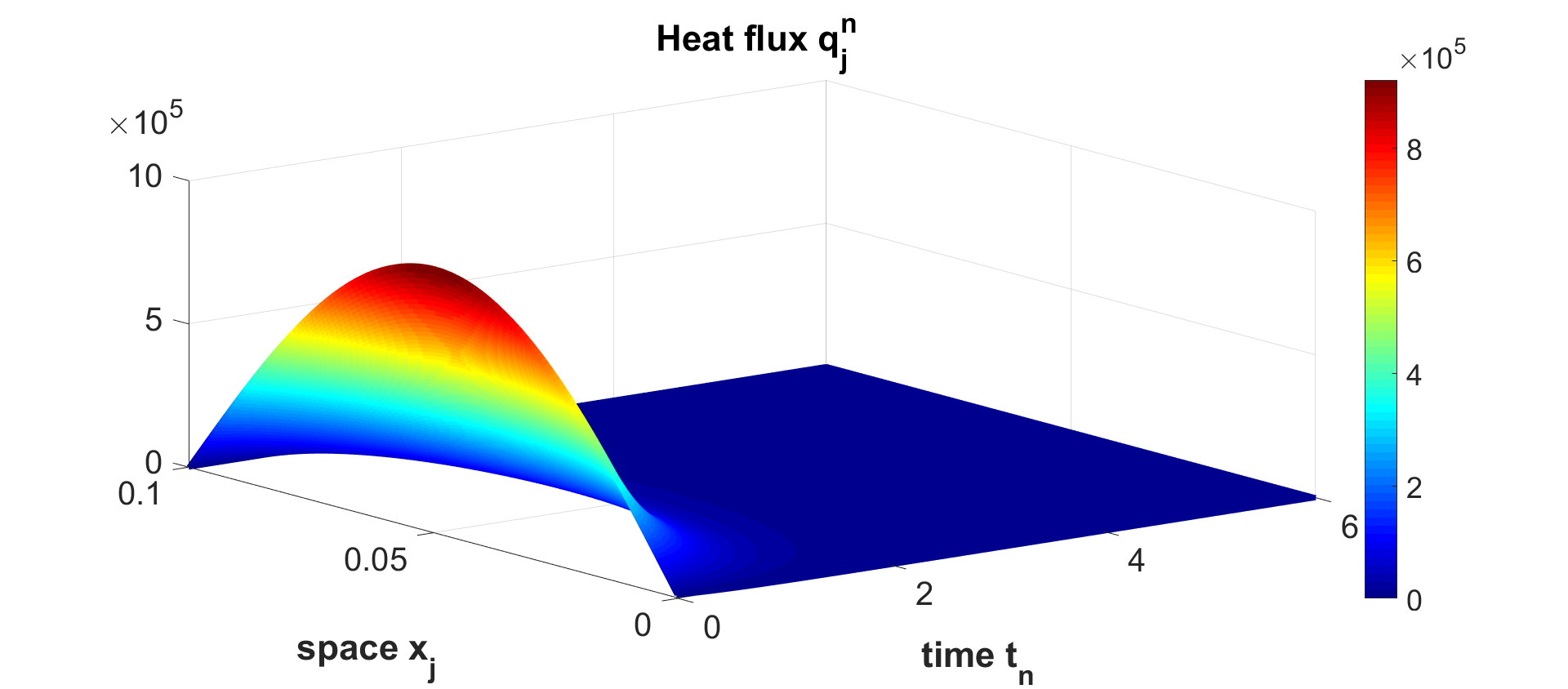}
\caption{} \label{fig:2a_FR}
\end{subfigure}
\begin{subfigure}{0.49\textwidth}
\includegraphics[width=\linewidth]{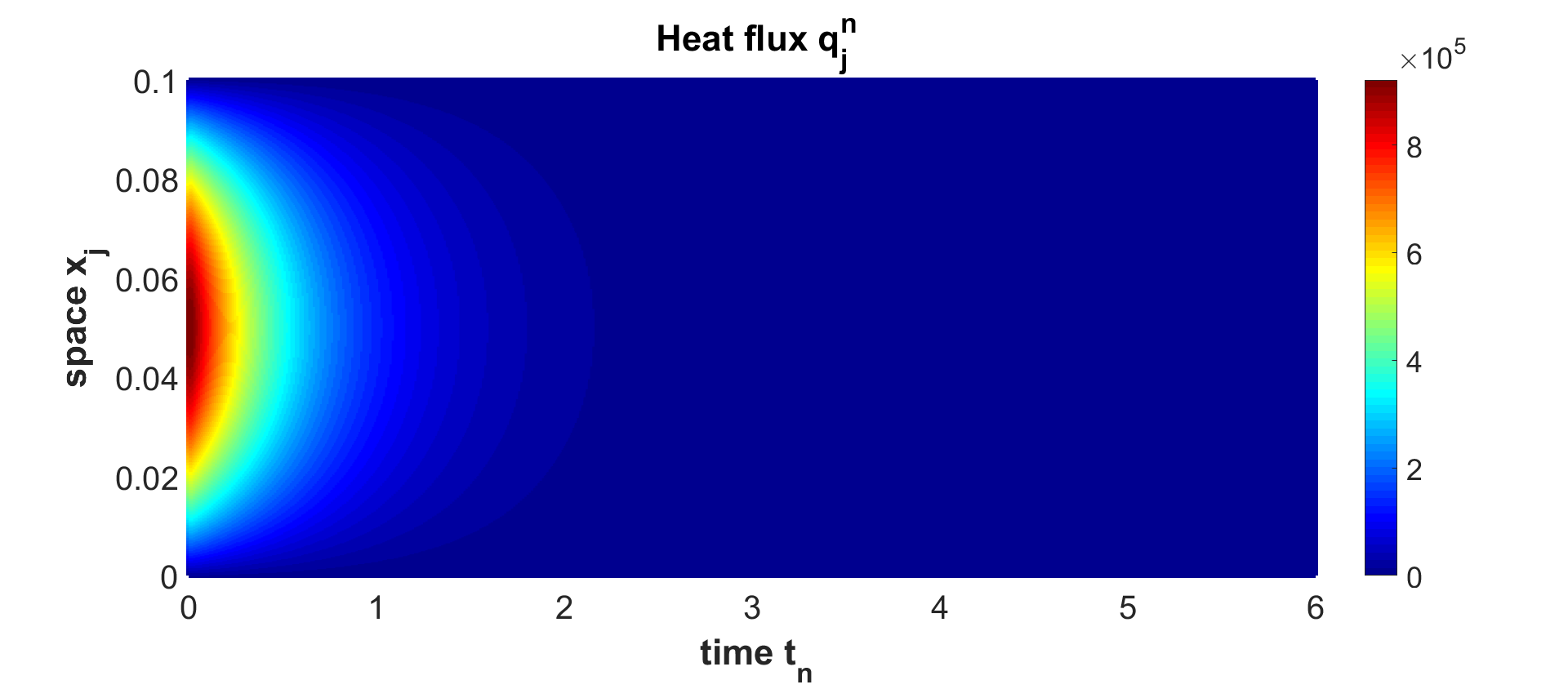}
\caption{} \label{fig:2b_FR}
\end{subfigure}
\begin{subfigure}{0.49\textwidth}
\includegraphics[width=\linewidth]{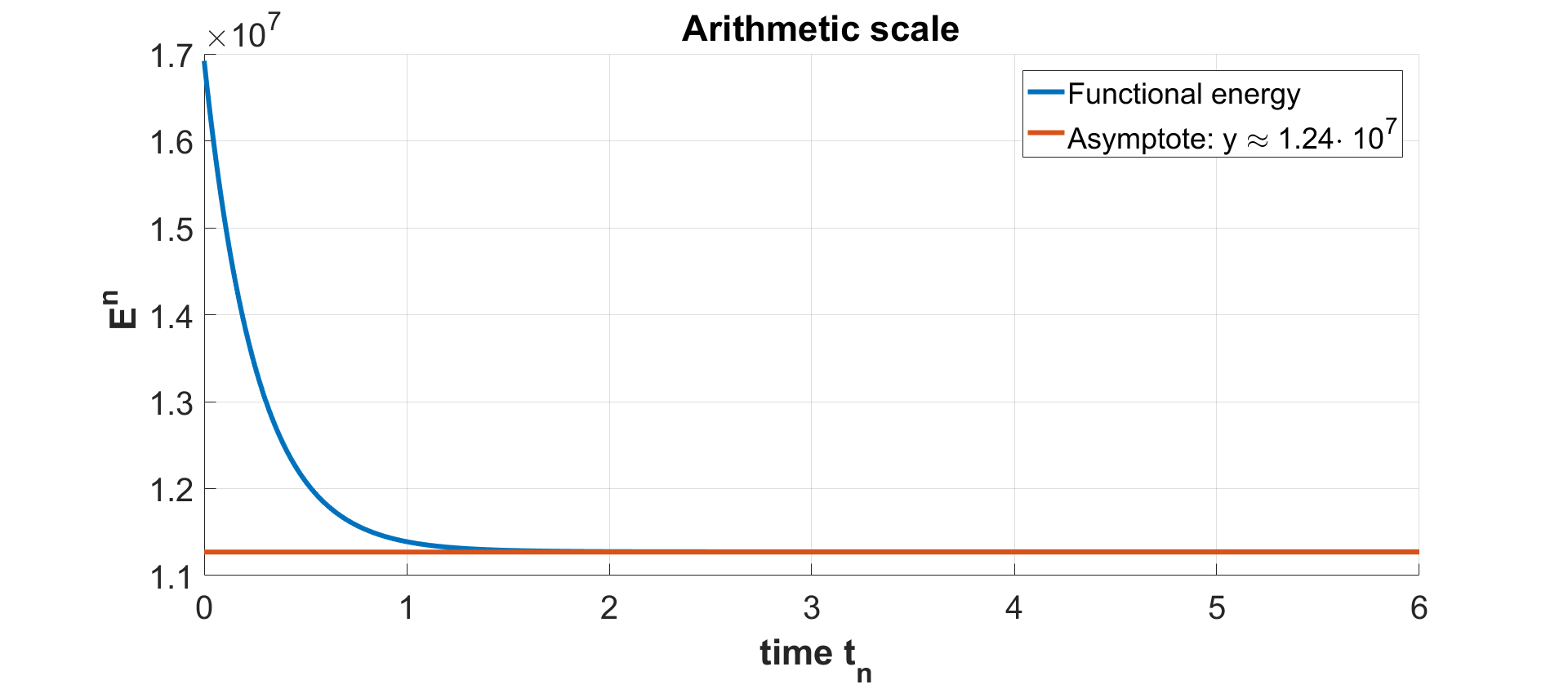}
\caption{} \label{fig:3a_FR}
\end{subfigure}
\begin{subfigure}{0.49\textwidth}
\includegraphics[width=\linewidth]{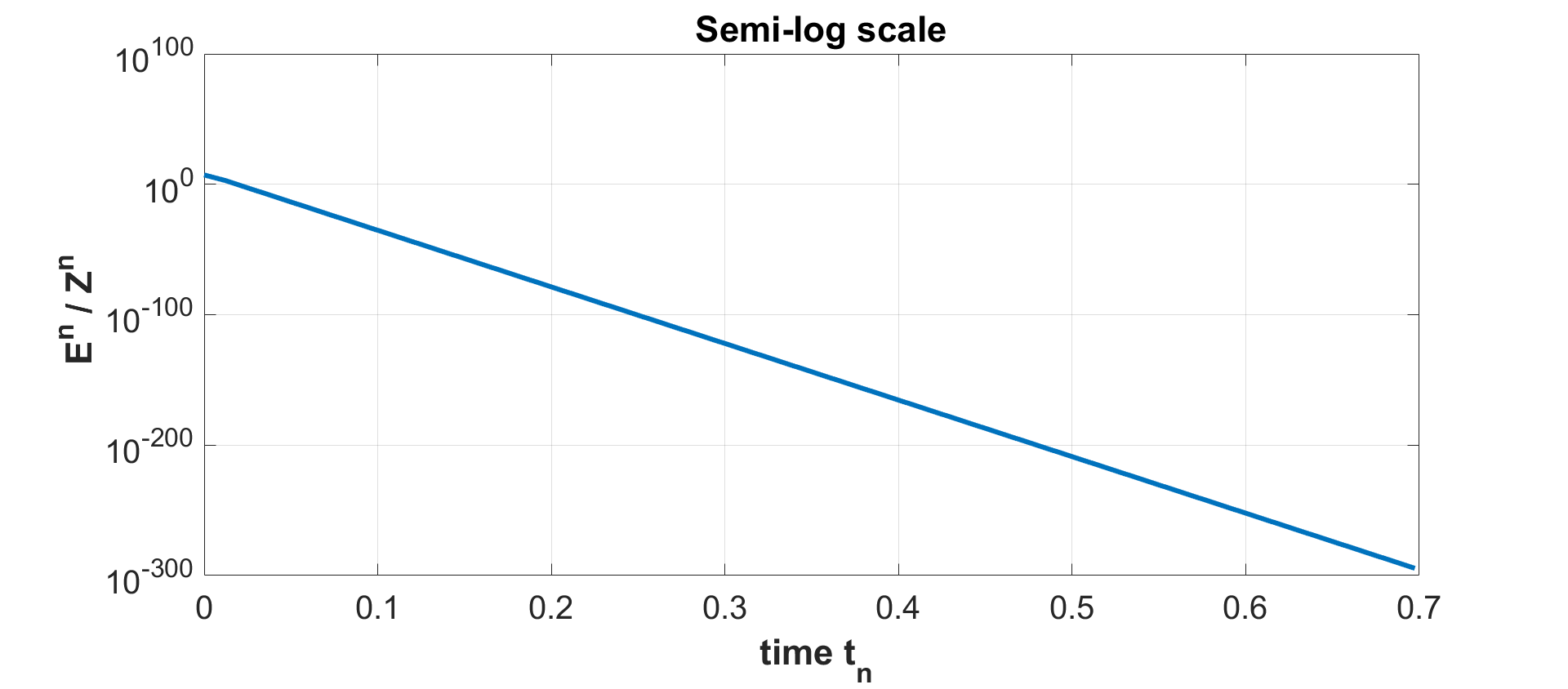}
\caption{} \label{fig:3b_FR}
\end{subfigure}
\caption{Solutions for the Fourier heat equation, presenting the agreement between the physical and mathematical requirements in regard to the Theorem \ref{main.result.exponential.decay} (item (i)).}  
\label{fig:2}
\end{figure}
}
%\newpage

{%\color{blue}

%\textbf{Case IV:} We consider the data from Case II  with $\tau_{q} = \mu^{2} = 0$. According to the Theorem \ref{main.result.exponential.decay} (item (ii)) the energy functional decays exponentially to zero. 
}

%\begin{figure}[htbp]
%\centering
%\begin{subfigure}{0.49\textwidth}
%\includegraphics[width=\linewidth]{figures/T_01_FR_case2}
%\caption{} \label{fig:1a_FR2}
%\end{subfigure}
%\begin{subfigure}{0.49\textwidth}
%\includegraphics[width=\linewidth]{figures/T_02_FR_case2}
%\caption{} \label{fig:1b_FR2}
%\end{subfigure}
%\begin{subfigure}{0.49\textwidth}
%\includegraphics[width=\linewidth]{figures/Q_01_FR_case2}
%\caption{} \label{fig:2a_FR2}
%\end{subfigure}
%\begin{subfigure}{0.49\textwidth}
%\includegraphics[width=\linewidth]{figures/Q_02_FR_case2}
%\caption{} \label{fig:2b_FR2}
%\end{subfigure}
%\begin{subfigure}{0.49\textwidth}
%\includegraphics[width=\linewidth]{figures/energy_FR_case2}
%\caption{} \label{fig:3a_FR2}
%\end{subfigure}
%\begin{subfigure}{0.49\textwidth}
%\includegraphics[width=\linewidth]{figures/log_energy_FR_case2}
%\caption{} \label{fig:3b_FR2}
%\end{subfigure}
%\caption{Solutions for the Fourier heat equation, presenting the agreement between the physical and mathematical requirements in regard to the Theorem \ref{main.result.exponential.decay} (item (ii)).}  
%\end{figure}

%================================================%
%================================================%
\section{Discussion}
%================================================%
%================================================%

It is known for decades that Fourier's law has limitations, thus it is inevitable to find its proper extension and discover the possibilities for practical implementations. In this respect, the Guyer-Krumhansl equation is a promising candidate.
While the GK equation has a more complex structure than Fourier's law, its modeling capabilities can cover wide range of phenomena from low-temperature to room temperature problems. Numerous experiments show the validity of the GK equation in these cases. In order to take a step forward to have the GK equation to be the next standard model, we presented three crucial properties of that model.

First, we have seen that for temperature-dependent situations, the thermodynamic background must be exploited as it connects the coefficients and they are not independent of each other. It influences the entire model, and introduces new terms into the consitutive equation. The discovery and understanding of these new terms will help the experimental studies in the future how and in what aspect to focus on the measurements and how to evaluate them. 
{%\color{blue}
Second, we have proved the well-posedness of the GK equation, that is, its solution exist, unique and continously depends on the initial data. This is crucial to have a strong mathematical and physical basis, and avoid the issues which are characteristic for the DPL concept or any other models lacking the proper background. Moreover, we also proved the uniform stabilization for a practically relevant case in which the initial temperature distribution $T_{0}\notin L_{*}^{2}(0,l)$, i.e., the initial energy content of the system is not zero. Our statements are demonstrated through numerical solutions, the proved properties are preserved in the discrete space as well.
These results establish a strong basis for the Guyer-Krumhansl heat equation and for its later applications. 
}
\vspace{0.5cm}

\section*{Funding}
A. J. A. Ramos was partially supported by CNPq Grant 310729/2019-0.

M. M. Freitas was partially supported by CNPq Grant 313081/2021-2.

D. S. Almeida Júnior was partially supported by CNPq Grant 314273/2020-4.

R. Kovács: The research reported in this paper is part of project no. BME-NVA-02, implemented with the support provided by the Ministry of Innovation and Technology of Hungary from the National Re-search, Development and Innovation Fund, financed under the TKP2021 funding scheme.
This paper was supported by the János Bolyai Research Scholarship of the Hungarian Academy of Sciences.
The research reported in this paper and carried out at BME has been supported by the grants National Research, Development and Innovation Office-NKFIH FK 134277.

\section*{Declarations}
\textbf{Conflict of interest} The author declares no competing interests.

\vspace{1cm}

\end{document}